\documentclass[12pt]{amsart}
\headheight=8pt     \topmargin=0pt
\textheight=624pt   \textwidth=432pt
\oddsidemargin=18pt \evensidemargin=18pt

\usepackage{amsfonts}

\usepackage{amsmath}

\usepackage{amssymb}
\usepackage{amsthm}

\renewcommand{\Re}{\operatorname{Re}}

\newcommand{\Perm}{\rm{Perm}}

\newcommand{\reals}{{\mathbb{R}}}
\newcommand{\R}{{\mathbb{R}}}

\newcommand{\eps}{{\varepsilon}}

\newcommand{\nequiv}{{\not{\!\!\equiv}}}

\newcommand{\scriptd}{{\mathcal{D}}}
\newcommand{\scriptg}{{\mathcal{G}}}
\newcommand{\scriptt}{{\mathcal{T}}}

\newcommand{\scriptb}{{\mathcal{B}}}
\newcommand{\scriptp}{{\mathcal{P}}}

\newcommand{\bfi}{{\mathbf{i}}}

\newtheorem{theorem}{Theorem}
\newtheorem{lemma}[theorem]{Lemma}

\newtheorem*{???}{???}
\newtheorem*{cor*}{Corollary}
\newtheorem*{claim*}{Claim}
\newtheorem*{defn*}{Definition}


\begin{document}

\subjclass{42B15 (primary)}

\title[Endpoint $L^p \to L^q$ bounds]{Endpoint $L^{p} \to L^{q}$ bounds for integration along certain polynomial curves}
\thanks{The author was supported in part by NSF grants DMS-040126 and DMS-0901569.}

\author{Betsy Stovall}
\address{UCLA Mathematics Department\\
Box 951555\\
Los Angeles, CA 90095-1555
}

\email{betsy@math.ucla.edu}

\begin{abstract}
We establish strong-type endpoint $L^p(\reals^d) \to L^q(\reals^d)$ bounds for the operator given by convolution with affine arclength measure on polynomial curves for $d \geq 4$.  The bounds established depend only on the dimension $d$ and the degree of the polynomial.  
\end{abstract}

\maketitle

\section{Introduction} \label{sec:intro}

Let $P:\reals \to \reals^d$ be a polynomial and define the operator $T_P$ by
\begin{align} \label{eqn:def_op}
T_Pf(x) := \int_{I} f(x-P(s)) \, d\sigma_P(s),
\end{align}
where $I$ is an interval and $d\sigma_P$ represents affine arclength measure along $P$,
\begin{align} \label{def:affine_arclength}
d\sigma_P(s) := |\det(P'(s),P''(s),\ldots,P^{(d)}(s))|^{2/d(d+1)}\, ds.
\end{align}
The goal of this article is to establish $L^p \to L^q$ bounds for $T_P$ in the full conjectured range of exponents in dimensions $d \geq 4$ (together with a slight improvement in Lorentz spaces).  The conjectured range of $(p,q)$ depends only on $d$, and our bounds on the operator norms depend only on $p$, $q$,  $d$, and the degree of $P$.  This result has already been established in dimension 2 by Oberlin \cite{Oberlin02} and in dimension 3 by Dendrinos, Laghi, and Wright \cite{DLW}.  

\begin{theorem} \label{thm:main}
Let $d \geq 4$, let $P:\reals \to \reals^d$ be a polynomial of degree $N$, and let $T_P$ be the operator defined in \eqref{eqn:def_op}.  Let $p_d := \frac{d+1}{2}$ and $q_d:= \frac{d+1}{2}\frac{d}{d-1}$.  Then $T_P$ maps $L^p \to L^q$ if $(p,q) = (p_d,q_d)$ or $(q_d',p_d')$, with bounds depending only on $d,N$.  Moreover, $T_P$ maps the Lorentz space $L^{p_d,u}(\reals^d)$ boundedly into $L^{q_d,v}(\reals^d)$ and $L^{q_d',v'}(\reals^d)$ into $L^{p_d',u'}(\reals^d)$ whenever $u<q_d$, $v>p_d$, and $u<v$.
\end{theorem}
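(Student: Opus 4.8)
The plan is to follow the two-part template standard for affine-arclength estimates: reduce the arbitrary polynomial $P$ to a bounded number of normalized pieces, and on each piece run a method-of-refinements argument. The affine-arclength density $|\det(P',\dots,P^{(d)})|^{2/(d(d+1))}$ is designed so that $d\sigma_P$ is unchanged under affine reparametrizations of $\reals$ and transforms by a constant under affine maps of $\reals^d$; since such maps also preserve all $L^p\to L^q$ operator norms, we are free to normalize. Concretely, using the Dendrinos--Laghi--Wright / Dendrinos--Wright decomposition I would break $I$ into $O_{d,N}(1)$ subintervals $I_k$ on each of which, after an affine change of variables in $\reals^d$ adapted to $I_k$, the torsion $L_P:=\det(P',\dots,P^{(d)})$ is single-signed and comparable to a monomial and the relevant Jacobian estimates hold with constants depending only on $d$ and $N$. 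It then suffices to prove the theorem when $I$ is a single such piece; by duality we may moreover concentrate on the vertex $(p_d,q_d)$, the estimate at $(q_d',p_d')$ being the adjoint statement.

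\emph{The combinatorial core.} On a good piece I would use Christ's method of refinements. Set $\alpha=\langle T_P\chi_E,\chi_F\rangle$. After passing to large subsets $E'\subset E$, $F'\subset F$ on which the densities $\int\chi_E(\,\cdot-P(s))\,d\sigma_P(s)$ and $\int\chi_F(\,\cdot+P(s))\,d\sigma_P(s)$ are $\gtrsim\alpha/|F|$, respectively $\gtrsim\alpha/|E|$, one alternately applies $T_P$ and $T_P^*$ a total of $d$ times. The resulting configurations are parametrized by $(s_1,\dots,s_d)$ ranging over a set of $d\sigma_P^{\otimes d}$-measure $\gtrsim(\alpha/\max\{|E|,|F|\})^{d}$, via a map into $E$ (or into $F$) whose Jacobian in the $s_i$ is $\pm\det(P'(s_1),\dots,P'(s_d))$. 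The affine-arclength weights are precisely those for which the Dendrinos--Wright estimate bounds $|\det(P'(s_1),\dots,P'(s_d))|$ below, pointwise, by $\prod_{i<j}|s_i-s_j|$ times an appropriate power of the densities $w_P(s_i)$; feeding this into the change of variables converts the lower bound on $d\sigma_P^{\otimes d}$-measure into a lower bound for $|E|$ or for $|F|$, and rearranging the exponents---which are forced by the scaling of the monomial model---yields the restricted weak-type inequality $\alpha\lesssim|E|^{1/p_d}|F|^{1/q_d'}$.

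\emph{From restricted weak type to the Lorentz bounds, and the main obstacle.} A single pass of the refinement yields only restricted weak type, which by itself gives neither the endpoint strong-type bound nor the Lorentz refinement. To reach the stated conclusion I would extract a power gain: carrying the trivial upper bounds for the parameter sets (they cannot exceed $I_k$) along with the lower bounds, and exploiting the dilation symmetry of the monomial model, one can upgrade the estimate to
\[
\langle T_P\chi_E,\chi_F\rangle \;\lesssim\; |E|^{1/p_d}\,|F|^{1/q_d'}
\left(\frac{\min\{|E|,|F|\}}{\max\{|E|,|F|\}}\right)^{\!\eps}
\]
for some $\eps=\eps(d)>0$ (after using the dilation to put $E$ and $F$ in scale-balanced position). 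A routine dyadic layer-cake summation then promotes such a restricted-strong-type-with-gain estimate to the bounds $L^{p_d,u}\to L^{q_d,v}$ whenever $u<q_d$, $v>p_d$, and $u<v$---the gain $\eps$ being exactly what lets one sum the resulting geometric series and trade the index $1$ for $v>p_d$ and $q_d>u$---and the dual range follows by duality. I expect the real difficulty to lie in this last step together with the uniformity demanded at the start: one must run the refinement carefully enough to see the gain while simultaneously verifying that \emph{every} constant---in the decomposition, in the Jacobian estimates, and in the refinement---depends only on $d$ and $N$, including on pieces where $L_P$ nearly vanishes and $d\sigma_P$ degenerates. Handling those degenerate regions uniformly is exactly what the affine-arclength normalization is built to accomplish, but making it quantitative across all $O_{d,N}(1)$ pieces at once is the delicate point.
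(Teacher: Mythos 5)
Your reduction to normalized pieces via the Dendrinos--Wright decomposition and your account of the refinement/Jacobian argument for the restricted weak-type inequality $\mathcal{T}(E,F)\lesssim|E|^{1/p_d}|F|^{1/q_d'}$ match the paper's first half (modulo the band-structure combinatorics needed when the two refinement parameters $\alpha=\mathcal{T}(E,F)/|F|$ and $\beta=\mathcal{T}(E,F)/|E|$ are very different in size, which you do not mention but which is standard from \cite{ChCCC}). The genuine gap is in your passage from restricted weak type to the strong-type and Lorentz bounds. You hang everything on the estimate
\[
\mathcal{T}(E,F)\;\lesssim\;|E|^{1/p_d}|F|^{1/q_d'}\Bigl(\tfrac{\min\{|E|,|F|\}}{\max\{|E|,|F|\}}\Bigr)^{\eps},
\]
for which you offer no argument beyond ``carrying the trivial upper bounds along with the lower bounds and exploiting the dilation symmetry.'' This estimate is a quantitative structure theorem for quasiextremal pairs (it asserts that near-extremizers must be measure-balanced), and it is not known for these operators; for $d=2$ it is outright false (take $E$ a ball of radius $r$ and $F=E+P(I)$: then $\mathcal{T}(E,F)\sim|E|^{2/3}|F|^{2/3}$ while $|E|/|F|\sim r\to0$), and for $d\geq4$ nothing of this form is available. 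Moreover, for the scale-invariant moment-curve model the one-parameter dilation group preserves the ratio $|E|/|F|$ up to the critical scaling, so ``putting $E$ and $F$ in scale-balanced position'' either is impossible or renders the gain factor vacuous; and for the normalized pieces with $|I|=1$ there is no dilation symmetry to exploit. So the key intermediate inequality is unproven and probably not true.

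The paper takes a different route, due to Christ \cite{ChQex}: instead of a single-pair estimate with an eccentricity gain, one proves ``two-set'' inequalities (Lemmas \ref{lemma:mlE} and \ref{lemma:mlF}) in which two sets $E_1,E_2$ interact with the same $F$ (respectively $F_1,F_2$ with the same $E$), with separate parameters $\alpha_1,\alpha_2,\beta_1,\beta_2$; one then runs the refinement tower through $E_1$ and $F$ for $2d-1$ steps and switches to $E_2$ only at the last step. The strong-type bound and the Lorentz improvement ($u<q_d$, $v>p_d$, $u<v$) then follow by summing over the dyadic level sets of $f$ and $g$ using the specific exponent relations \eqref{eq:ineqexp}; the strict inequality there is exactly what replaces your $\eps$-gain, and it is obtained for interactions between \emph{different level sets of the same function}, not between $E$ and $F$. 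If you want to complete a proof along rigorous lines you should adopt this two-set formulation (or supply a genuine proof of a balancedness statement for quasiextremals, which would be a substantial new result).
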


In the case when $I$ has infinite length and $d\sigma_P \nequiv 0$, this theorem is sharp up to Lorentz space endpoints.  A proof of this in the case when $P(t) = (t,t^2,\ldots,t^d)$ is given in \cite{StoCCC}.  

If $I$ has finite length, then $T_P$ is easily seen to be bounded from $L^1$ to $L^1$ and $L^{\infty}$ to $L^{\infty}$ (though finite $L^p \to L^q$ bounds depending only on $d,N$ cannot hold when $(p^{-1},q^{-1})$ lies off the line segment joining $(p_d^{-1},q_d^{-1})$ and $(1-q_d^{-1},1-p_d^{-1})$).  By interpolation with the bounds established in Theorem~\ref{thm:main}, we obtain nearly sharp bounds in this case as well.

We now give a little of the history of this problem.  It was observed by Drury in \cite{Drury90} that the affine arclength $d\sigma_P$ is in some ways a more natural choice than euclidean arclength for averages along $P$ and for the restriction of the Fourier transform to $P$.  For one, affine (like euclidean) arclength measure is easily seen to be parametrization independent; that is, if $\psi:\R \to \R$ is a diffeomorphism, then
$$
d\sigma_{P \circ \psi}(s) = |\psi'(s)||\det(P'(\psi(s)),\ldots,P^{(d)}(\psi(s)))|^{2/d(d+1)}\, ds.
$$
For two, affine (unlike euclidean) arclength measure compensates for degeneracies in the curve $P$ by vanishing where the torsion vanishes (for instance at the cusp in the curve parametrized by $(t^2,t^3)$).  This results in an $L^p \to L^q$ mapping theory in which $p$ and $q$ are  independent of the curve $P$.  Finally, as its name implies, affine (again unlike euclidean) arclength measure behaves nicely under affine transformations of $\R^d$.  This property will allow us to prove bounds on the operator norms of $T$ that depend only on $d$ and the degree of $P$.  A general discussion of affine arclength may be found in \cite{Gugg}.

Drury \cite{Drury90} established $L^p \to L^q$ bounds for the optimal range of $p,q$ in dimension 2 for $P(t) = (t,p(t))$ satisfying certain regularity conditions (though not necessarily polynomial).  Later, Oberlin \cite{Oberlin02} strengthened Drury's result in the polynomial case by establishing optimal $L^p \to L^q$ bounds for arbitrary polynomials in dimension 2 and for polynomials of the form $P(t) = (t,p_1(t),p_2(t))$ in dimension 3.  
In dimension 2, Oberlin established bounds for the operator norms of $T_P$ depending only on $p$, $q$, and the degree of $P$; he remarked that such invariant bounds were likely to hold in higher dimensions as well.  Recently, Dendrinos, Laghi, and Wright \cite{DLW} used a geometric inequality established in \cite{DW} to treat the general three dimensional case (also establishing invariant bounds).  Other results in a similar vein are due to Choi in \cite{Choi_JAustMS}, \cite{Choi_JKMS} and by Pan in \cite{Pan}.    A recent preprint of Oberlin (\cite{Oberlin_arxiv}) establishes bounds for $T_P$ in certain non-polynomial cases for $d=2,3,4$.  

More broadly, there is a growing body of literature on so-called generalized Radon transforms, operators defined by integration over families of submanifolds of $\reals^d$.  We mention here a few articles which are particularly closely related to this one; by no means is this an exhaustive bibliography.  In \cite{ChCCC}, Christ developed a combinatorial technique that he used to establish optimal restricted weak type estimates for convolution with affine arclength measure on the {\em{moment curve}} $(t,t^2,\ldots,t^d)$ for $d \geq 4$ (strong type bounds had been proved in lower dimensions by Littman \cite{Litt} and Oberlin \cite{Obe1,Obe2,Obe3} via different techniques).  Later on, Tao and Wright used ideas from \cite{ChCCC} together with several new ideas to establish optimal (up to Lebesgue space endpoint) bounds for averages over families of smooth curves in \cite{TW}.  This article is one of a few recent efforts toward establishing the endpoint bounds in special cases of the Tao--Wright theorem.  In \cite{GressPolyEndpt}, Gressman proved the restricted weak-type estimate at the endpoint in the polynomial case of Tao and Wright's theorem.   In \cite{ChQex}, Christ showed that it was possible to use arguments similar to those in \cite{ChCCC} to prove strong-type estimates.  We mention three subsequent applications of this technique (this article being a fourth), namely \cite{DLW} wherein Dendrinos, Laghi, and Wright consider $T_P$ in dimension 3, \cite{Laghi} in which Laghi proves endpoint bounds for a restricted $X$-ray transform, and the author's \cite{StoCCC} which establishes strong-type bounds for convolution with affine arclength measure along the moment curve in dimensions $d\geq 4$ (as mentioned above, low-dimensional results were already known).  In this article, we will use techniques from \cite{ChCCC}, \cite{ChQex}, \cite{DLW}, and \cite{StoCCC}, mentioned above, as well as from \cite{Gress06}, in which Gressman established restricted weak-type bounds for convolution with certain measures along polynomial curves whose entries are monomials.

Related to our problem is the restriction of the Fourier transform to curves with affine arclength.  In this case, it is conjectured that uniform $L^p \to L^q$ bounds hold, and though considerable progress has been made, the conjecture has not been completely resolved.  A detailed history of this problem may be found in \cite{DW}, for instance, along with some recent results.  Other articles in this vein include the recent work of Bak, Oberlin, and Seeger \cite{BOSI}, \cite{BOSII} and the less recent work of Drury and Marshall \cite{DMI}, \cite{DMII} and of Sj\"olin \cite{Sjolin} (and of course Drury's \cite{Drury90}).

\subsection*{Notation}  In this article, we will write $A \lesssim B$ when $A \leq C B$ with the constant $C$ depending only on the ambient dimension and the degree of the polynomial $P$.  We will also use the accompanying notation $A \sim B$.  We will occasionally use the `big $O$' notation, writing $A = O(B)$ instead of $A \lesssim B$.  In addition, if $1 \leq p \leq \infty$, $p'$ refers to the exponent dual to $p$ (which satisfies $1=p^{-1}+{p'}^{-1}$).

\subsection*{Acknowledgements}  This article was adapted from part of the author's Ph.D.\ thesis, and she would like to thank her advisor, Mike Christ, for initially drawing her attention to the article \cite{DLW} and for his continuing support and advice.  In particular, Prof.~Christ's kind proofreading of and comments on earlier versions of this manuscript significantly improved the exposition.  The author would also like to thank the anonymous referee at the JFA for many valuable comments and suggestions.

\section{Initial simplifications and a key theorem} \label{sec:reductions}

As mentioned above, our problem is closely related to the problem of the restriction of the Fourier transform to curves with affine arclength.  One of the main tools used here and in \cite{DLW} is a theorem which was originally proved by Dendrinos and Wright in \cite{DW} as one step toward such a restriction theorem.  

Let 
\begin{align} \label{def:L_P}
L_P(s) := \det(P'(s),P''(s),\ldots,P^{(d)}(s)),
\end{align}
for $s \in \reals$.  Hence $d\sigma_P = L_P^{2/d(d+1)}\,ds$.  For $t = (t_1,t_2,\ldots,t_d) \in \reals^d$, define
\begin{align} \label{def:J_P}
J_P(t) := \det(P'(t_1),P'(t_2),\ldots,P'(t_d)).
\end{align}
As indicated by the notation, this last term will arise as the jacobian of a certain map from $\reals^d \to \reals^d$.

\begin{theorem}[Dendrinos and Wright]\label{thm:DW}  Let $P:\reals \to \reals^d$ be a polynomial of degree $N$ such that $L_P(s) \not{\!\!\equiv} 0$.  Then there exists a decomposition $\reals = \bigcup_{j=1}^{C_N} \overline{I_j}$ such that the $I_j$ are pairwise disjoint open intervals, and for each $j$, there exist a positive constant $A_j$, a non-negative integer $K_j \leq N$, and a real number $b_j \in \reals \backslash I_j$ such that
\begin{enumerate}
\item $|L_P(s)| \sim A_j|s-b_j|^{K_j}$ for every $s\in I_j$ 
\item Whenever $(t_1,\ldots,t_d) \in I_j^d$,
\[
|J_{P}(t_1,\ldots,t_d)| \geq C \prod_{k=1}^d |L_P(t_k)|^{1/d} \prod_{l<k}|t_k-t_l|
\]
\item For any $\eps \in \{-1,1\}^d$, define $\Phi^{\eps}(t_1,\ldots,t_d) := \sum_{j=1}^d \eps_j P(t_j)$.  Then for each $x \in \reals^d$, the cardinality of $(\Phi^{\eps})^{-1}\{x\} \cap I_j^d$ is at most $d!$.
\end{enumerate}
Here $C$ and the implicit constants depend only on $N$ and $d$.
\end{theorem}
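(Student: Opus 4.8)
The plan is to establish the three conclusions in turn, exploiting that $L_P$ is a Wronskian-type determinant and $J_P$ a generalized Vandermonde, and reducing the curve to a monomial normal form on each interval of the decomposition. \textbf{Part (i).} Since $\deg P=N$, the quantity $L_P=\det(P',\dots,P^{(d)})$ is a polynomial of degree bounded in terms of $d,N$, hence has finitely many real zeros. The decomposition comes from an elementary fact about real polynomials: for any nonzero $Q$ of degree $n$ one can partition $\reals$ into $O_n(1)$ open intervals, with closures covering $\reals$, on each of which $|Q(s)|\sim A\,|s-b|^{K}$ for some $A>0$, some nonnegative integer $K\le n$, and some real $b$ outside the interval. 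One proves this by factoring $Q$ over $\cpx$ and inserting breakpoints at the real parts $\Re z$ of the roots, at the scales $\Re z\pm|\Im z|$, and at the points $2r-r'$ and $\tfrac12(r+r')$ for pairs $r\neq r'$ of real roots; a finite case analysis then shows that on each resulting interval every factor $|s-z|$ is comparable either to a constant or to $|s-b|$ for one common $b$, so their product has the asserted form. Applying this to $Q=L_P$ yields the intervals $I_j$, the constant $C_N$, the quantities $A_j,b_j$ and the integers $K_j$ (the bound $K_j\le N$ coming from a closer look at the order of vanishing of the Wronskian-type determinant $L_P$). The output needed below is that $L_P$ has constant sign on each $I_j$ and is comparable there to a single power of $|s-b_j|$.

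\textbf{Reduction to a monomial model (the crux).} Fix $I_j$. Affine arclength is unchanged, and $L_P$ and $J_P$ pick up only constant Jacobian factors, under affine maps of $\reals^d$ and affine reparametrizations of $\reals$; so after such changes we may assume $I_j\subset(0,\infty)$, $b_j=0$, and $L_P(s)\sim A\,s^{K}$ on $I_j$. The key structural claim---which I expect to be the main obstacle---is that, after one further affine normalization and after subdividing $I_j$ into $O_{d,N}(1)$ pieces if necessary, one can write $P(s)=\Gamma(s)+E(s)$, where $\Gamma(s)=(c_1 s^{a_1},\dots,c_d s^{a_d})$ is a monomial curve with integers $1\le a_1<\dots<a_d$, $c_i\ne 0$ and $\sum_i a_i-\binom{d+1}{2}=K$, and $E$ is a perturbation negligible in the sense that it alters $L_P$ and $J_P$ only by bounded factors. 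This is a Newton-polygon argument in the spirit of \cite{Gress06}: the condition $L_P(s)\sim A\,s^{K}$ together with $\deg P\le N$ pins down the leading monomial of each component of $P$ and forces the remainder to carry strictly smaller weight, uniformly in $P$ with constants depending only on $d,N$.

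\textbf{Part (ii).} A divided-difference column reduction gives $J_P(t)=\prod_{l<k}(t_k-t_l)\,M(t)$, where $M(t)=\det\big(P_i'[t_1,\dots,t_j]\big)_{i,j=1}^d$ and $P_i'[t_1,\dots,t_j]$ is the divided difference of order $j-1$; since $M(s,\dots,s)$ is a nonzero constant multiple of $L_P(s)$, conclusion (ii) is equivalent, after cancelling the Vandermonde factor, to the bound $|M(t)|\gtrsim\prod_k|L_P(t_k)|^{1/d}$, which is sharp on the diagonal. For the monomial model $\Gamma$ one computes $M_\Gamma(t)=\big(\prod_i c_i a_i\big)\,s_\lambda(t)$, where $s_\lambda$ is the Schur polynomial of the partition with parts $\lambda_i=a_{d+1-i}-1-(d-i)$; one checks $|\lambda|=K$, while $\prod_k|L_\Gamma(t_k)|^{1/d}\sim (t_1\cdots t_d)^{K/d}$. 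Thus the claim for $\Gamma$ reduces to the scale-invariant inequality $s_\lambda(t)\gtrsim (t_1\cdots t_d)^{K/d}$ on $(0,\infty)^d$: since $s_\lambda$ is a positive sum of monomials, homogeneous of degree $K=|\lambda|$, it is continuous and strictly positive on $\{t>0:\ t_1\cdots t_d=1\}$ and tends to $+\infty$ toward the boundary and toward infinity (on each face $t_i=0$ the restriction is, after dividing out $(t_1\cdots t_d)^{\lambda_d}$, again a nonvanishing Schur polynomial, the relevant partition having at most $d-1$ parts), so it is bounded below there by a positive constant depending only on $\lambda$, hence only on $d,N$. Transferring this to $P=\Gamma+E$ through the divided-difference expansion, and showing that the contribution of $E$ never overwhelms the main term even as $t_k\to t_l$ (where (ii) is sharp), is the remaining technical point.

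\textbf{Part (iii).} By (ii) the Jacobian of $\Phi^\eps$ equals $\pm J_P(t)$ and is nonzero on $I_j^d$ off the diagonal hyperplanes $\{t_k=t_l\}$, so $\Phi^\eps$ is a local diffeomorphism there; the fibers meeting a diagonal have positive codimension and are harmless for the change of variables this theorem is used for. To count the nondegenerate solutions of $\Phi^\eps(t)=x$ it suffices to show that for each $\delta\in\{-1,1\}^d$ the system $\sum_k\delta_k P(s_k)=x$ has at most one solution with $s_1<\dots<s_d$ in $I_j$; summing over the $d!$ ordered chambers of $I_j^d$ then gives the bound $d!$. In the monomial model this becomes the statement that a nonzero signed atomic measure on $(0,\infty)$ whose $O_d(1)$ atoms arise from two ordered configurations cannot annihilate the Chebyshev system $\{s^{a_i}\}_{i=1}^d$, which follows from the classical estimates on the number of positive zeros of generalized polynomials with boundedly many terms (Descartes' rule, Laguerre's theorem); the perturbed case is handled by the same principle after the normal form. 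Thus the whole argument rests on the normal-form reduction and the transfer of the sharp inequality (ii) to the perturbed curve, with constants depending only on $d$ and $N$---the step I flag as the main obstacle.
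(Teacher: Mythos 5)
The paper does not prove this theorem at all: it is quoted verbatim from the Dendrinos--Wright preprint \cite{DW}, and Section~\ref{sec:pf_technical_lemmas} only sketches the relevant machinery. Measured against that machinery, your argument has a genuine gap at the step you yourself flag as the crux: the reduction of $P$ on each $I_j$ to a monomial normal form $P=\Gamma+E$ with $E$ ``negligible'' for $L_P$ and $J_P$. This claim is not justified and is almost certainly false in the generality you need. First, $L_P$ is invariant under unimodular linear maps of $\reals^d$ and under adding multiples of one component to another, so the hypothesis $|L_P(s)|\sim A|s-b_j|^{K}$ constrains only the Wronskian-type determinant and cannot ``pin down the leading monomial of each component.'' Second, the intervals produced in part~(i) include pieces on which $|s-b_j|$ is essentially constant (the ``dyadic'' intervals of the decomposition), where no single monomial of a component dominates. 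Third, and most seriously, $J_P$ vanishes on all diagonals $\{t_k=t_l\}$, exactly where conclusion~(ii) is sharp; asserting that a perturbation $E$ ``alters $J_P$ only by bounded factors'' there is not a reduction step but is essentially equivalent to conclusion~(ii) itself. What Dendrinos and Wright actually do is entirely different: they introduce all the leading minors $L_1,\dots,L_d$ of \eqref{def:L_j}, run a two-procedure decomposition (the \fbox{$D1$}/\fbox{$D2$} steps recalled in Section~\ref{sec:pf_technical_lemmas}) that makes every $L_k$ single-signed and comparable to a power of $|s-b_j|$ on each final interval, and then prove (ii) by induction through the recursive integral identity $J_P=J_d$ of \eqref{def:Jr}--\eqref{eqn:J_P_integral}, using that each integrand is single-signed on the ordered simplex. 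No monomial normal form appears; the integral identity is precisely the substitute for it. (Your Schur-polynomial computation is the correct mechanism for genuinely monomial curves, as in \cite{Gress06} and \cite{ChTAMS}, but that is a special case, and indeed it reappears in the paper's proof of Lemma~\ref{lemma:Id-1} only \emph{after} the DW decomposition has reduced matters to Vandermonde-type determinants.)

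Two smaller problems: your proof of (iii) does not close even in the monomial model. Nonvanishing of $J_{P}$ off the diagonals gives only local injectivity of $\Phi^\eps$ on each ordered chamber, and the Descartes/Laguerre count you invoke applies to a single generalized polynomial, whereas here one must rule out two distinct ordered $d$-tuples producing the same value of a $d$-dimensional map; a signed measure with up to $2d$ atoms annihilating a $d$-dimensional Chebyshev system need not vanish, so the interlacing coming from the two orderings must be used in an essential way (in \cite{DW} the injectivity on chambers is a separate global argument). Also, in part~(i) the requirement is not merely that $|L_P|\sim A_j|s-b_j|^{K_j}$ piecewise but that the \emph{same} points $b_j$ work simultaneously for all the minors $L_1,\dots,L_d$ on the final intervals; this compatibility across the $d$ minors is the reason the DW decomposition is iterative rather than a single application of the elementary one-polynomial lemma you describe.
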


We may assume in proving Theorem \ref{thm:main} that $I$ has finite length.  Theorem \ref{thm:DW} allows us to make a few further simplifications.  First, it suffices to prove the desired bounds for an operator as in \eqref{eqn:def_op}, only with the integral restricted to one of the intervals $I_j$ from the decomposition above.  Next, after translating $I_j$ and reflecting it across 0 if needed, we may assume that the real number $b_j$ in Theorem \ref{thm:DW} is equal to 0 and that $I_j \subset (0,\infty)$.  The $L^{p_d,u} \to L^{q_d,v}$ bounds are invariant under scalings in $s$ ($P^a(s) = P(as)$) and multiplication of $P$ by a constant.  By scaling in the $s$ variable, we may assume that $|I_j| = 1$.  Finally, using the fact that $L_{\lambda P} = \lambda^d L_P$, after multiplying $P$ by an appropriate constant, we may assume that the constant $A_j$ in Theorem \ref{thm:DW} is equal to 1.  In particular, $|L_P(s)| \sim s^K$ uniformly on $I$, where the implicit constants depend only on $d,N$.  In summary, it suffices to prove uniform estimates for the operator
\begin{align} \label{eq:def_T2}
Tf(x) := \int_I f(x-P(s)) \, d\mu(s),
\end{align}
where $d\mu(s) = s^{2K/d(d+1)}\, ds$ and $I \subset (0,\infty)$ has length 1.

We note that Dendrinos, Folch-Gabayet, and Wright have recently shown in \cite{DF-GW} that Theorem \ref{thm:DW} extends to $d$-tuples of rational functions of bounded degree.  It therefore seems likely that Theorem \ref{thm:main} could be generalized to give bounds for convolution with affine arclength along curves parametrized by such functions, but the author has not investigated the extent to which the arguments in this article would need to be changed.

\section{Reduction of Theorem \ref{thm:main} to two lemmas}

If $E$ and $F$ are subsets of $\reals^d$, then we will denote
\[
\langle T \chi_E, \chi_F \rangle =: \scriptt(E,F).
\]
We define two quantities,
\begin{align} \label{def:alpha_beta}
\alpha:= \frac{\scriptt(E,F)}{|F|} \qquad \beta := \frac{\scriptt(E,F)}{|E|},
\end{align}
which have played an important role in much of the recent literature on generalized Radon transforms and, in particular, appeared in most of the references mentioned in Section \ref{sec:intro}.  Note that $\beta$, for instance, represents the average over $x \in E$ of 
\[
T^*\chi_F(x) = \mu\{s \in I: x+P(s) \in F\}.
\]

The quantities $\alpha,\beta$ have mostly been used to prove restricted weak type bounds, i.e. $\scriptt(E,F) \lesssim |E|^{1/p}|F|^{1/q'}$.  One can easily check that in our case, the restricted weak-type bound at the endpoint would follow from 
\[
|E| \gtrsim \alpha^{d(d+1)/2}(\beta/\alpha)^{d-1}.
\]
In \cite{ChQex}, Christ proved that ``trilinear'' versions of such estimates could be used to establish strong-type bounds.  This method was applied in \cite{DLW} to prove strong-type estimates for the operator in \eqref{eqn:def_op} in the cases $d=2,3$ and in \cite{StoCCC} to prove strong-type estimates for convolution with affine arclength measure along the moment curve.  Using Christ's techniques, one can show that the next two lemmas imply Theorem \ref{thm:main}.  See \cite{StoCCC} for details.  For notational convenience, we define the quantity 
\begin{align}\label{def:n}
n := d(d+1)/(2K+d(d+1)).
\end{align}
Notice that for $0 \leq r \leq 1$,
\begin{align} \label{eqn:mu(0,r)}
\mu([0,r^n]) = n r.
\end{align}

\begin{lemma} \label{lemma:mlE}
Let $E_1,E_2,F \subset \reals^d$ have finite positive measures.  Assume that for $j=1,2$ $T\chi_{E_j}(y)  \geq \alpha_j$ for all $y \in F$ and $\frac{\mathcal{T}(E_j,F)}{|E_j|}  \geq \beta_j$, where $\alpha_2 \geq \alpha_1$.  Then
\begin{align}\label{eq:mlE}
    |E_2| &\gtrsim 
    \alpha_1^{\frac{d(d+1)}{2}}(\frac{\beta_1}{\alpha_1})^{d-1} (\alpha_2/\alpha_1)^{(d+1)/2+n(d-1)/2}.
\end{align}
\end{lemma}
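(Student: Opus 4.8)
The plan is to run the inflation/iteration argument of Christ (as in \cite{ChCCC}, \cite{StoCCC}) adapted to the two different level sets $E_1$ and $E_2$. We think of $\beta_1$ as a lower bound for the average of $T^*\chi_F$ over $E_1$, so by Chebyshev there is a subset $E_1'\subset E_1$ with $|E_1'|\sim|E_1|$ on which $T^*\chi_F(x)=\mu\{s\in I: x+P(s)\in F\}\gtrsim\beta_1$ pointwise. The first step is a pigeonholing/refinement: using Theorem~\ref{thm:DW}(ii), the Jacobian of the map $(s_1,\dots,s_d)\mapsto x+\sum P(s_i)$ (with the reductions of Section~\ref{sec:reductions}, so $|L_P(s)|\sim s^K$) is bounded below by $\prod s_k^{K/d}\prod_{l<k}|s_k-s_l|$ on $I^d$, and part~(iii) bounds the multiplicity of such maps. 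Combining these with \eqref{eqn:mu(0,r)} lets us estimate measures of images of $d$-fold "sums of slabs" from below.

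The core of the argument is an asymmetric iteration. Starting from a point $y_0\in F$, I would first use the hypothesis $T\chi_{E_2}(y_0)\geq\alpha_2$ together with $\mu([0,r^n])=nr$ to find, for a suitable scale, a large piece of $E_2$ of the form $\{y_0-P(s): s\in S\}$ with $\mu(S)\sim\alpha_2$; the parameter $n$ controls how a set of $s$-measure $\sim\alpha_2$ sits in a set of $s$-length $\sim(\alpha_2/\,\cdot\,)^{1/n}$. Then, at $d-1$ further stages, I would use the $\alpha_1$-hypothesis (on $E_1$, hence $E_1'$) and the $\beta_1$-hypothesis alternately to "inflate": at each stage we adjoin an interval's worth of new $s$-variables, pass through $P$, and land in a set whose measure we bound below by the Dendrinos--Wright Jacobian estimate. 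Tracking the exponents: each of the $d$ directions contributes a factor $\alpha_1$ (or $\alpha_2$ for the first), the $\beta_1/\alpha_1$ factor enters once from a "going down to $E_1'$ and back up via $T^*$" step, the $\prod_{l<k}|s_k-s_l|$ Jacobian factors produce the combinatorial exponent $d(d+1)/2$, and the mismatch between $\alpha_1$ and $\alpha_2$ in the first (distinguished) direction, filtered through the scaling exponent $n$ for the measure $d\mu$, produces precisely the extra factor $(\alpha_2/\alpha_1)^{(d+1)/2+n(d-1)/2}$. Setting $\alpha_1=\alpha_2$ recovers the symmetric bound $|E|\gtrsim\alpha^{d(d+1)/2}(\beta/\alpha)^{d-1}$ quoted before the lemma, which is a useful consistency check on the bookkeeping.

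I expect the main obstacle to be the asymmetric bookkeeping itself: making the iteration precise so that the $\alpha_2$-direction is genuinely "first" and isolated, and verifying that the scaling weight $s^{2K/d(d+1)}$ interacts with the Jacobian lower bound from Theorem~\ref{thm:DW}(ii) to give exactly the stated power of $\alpha_2/\alpha_1$ rather than something slightly off. A secondary technical point is ensuring that all the refinements (Chebyshev passes to $E_j'$, pigeonholing scales so that $|L_P(s)|\sim s^K$ can be treated as essentially constant on each dyadic piece, and controlling multiplicity via part~(iii)) only cost constants depending on $d,N$, so that the final estimate is uniform. Once the geometry is set up, the exponent count is essentially forced, so the real work is organizing the induction on the number of adjoined variables cleanly — I would phrase it as: "after adjoining $k$ variables, the resulting set has measure $\gtrsim\alpha_1^{\,?}(\alpha_2/\alpha_1)^{\,?}\prod(\text{difference factors})$," prove this by induction on $k$ from $1$ to $d$, and read off \eqref{eq:mlE} at $k=d$.
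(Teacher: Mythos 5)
Your sketch captures only the first half of the paper's argument, and the exponent bookkeeping you describe does not actually close. The direct inflation argument (the paper's Lemma \ref{lemma:initial_lb}, built on the towers $\Omega_1\subset\cdots\subset\Omega_{2d}$ of Lemma \ref{lemma:setup_mlE}) does produce $|E_2|\gtrsim \alpha_1^{d(d+1)/2}(\beta_1/\alpha_1)^{r_d}(\alpha_2/\alpha_1)^{(d+1)/2+n(d-1)/2}$ — but with exponent $r_d=1+3+\cdots+(d-1)$ (or $2+4+\cdots+(d-1)$), not $d-1$. The reason is that in the Jacobian factor $\prod_{l<k}|t_k-t_l|$ each pair $(l,k)$ with $k$ odd contributes a separation of size only $\beta_1$ (the odd steps of the tower use the $\beta_1$ hypothesis), so the factor $\beta_1/\alpha_1$ appears once for each such pair, not "once" as you assert. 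Since $r_d\geq d-1$, this is fine when $\beta_1\gtrsim\alpha_1$, but when $\beta_1\ll\alpha_1$ the bound $(\beta_1/\alpha_1)^{r_d}$ is strictly weaker than the claimed $(\beta_1/\alpha_1)^{d-1}$, and the lemma does not follow. Your "consistency check" at $\alpha_1=\alpha_2$ is misleading for the same reason: even the symmetric restricted weak-type bound $|E|\gtrsim\alpha^{d(d+1)/2}(\beta/\alpha)^{d-1}$ cannot be obtained by the naive iteration when $\beta\ll\alpha$; this is precisely the difficulty Christ's band-structure argument was invented to overcome.

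What is missing is therefore the entire case $\beta_1\ll\alpha_1$, which is the technical heart of the proof: one must (i) run a band-structure decomposition (Lemma \ref{lemma:exists_b_struct}) adapted to the weighted measure $d\mu=s^{2K/d(d+1)}ds$, grouping indices whose $t$-coordinates are within $\delta\alpha_1(t_it_j)^{-K/d(d+1)}$ of each other; (ii) change variables to $(\tau,\sigma)$, collapsing each bound index onto the free index of its band, and prove a lower bound for the collapsed Jacobian $J(\tau,\sigma)$ (Lemma \ref{lemma:lb_J}) in which the small separations within a band no longer appear; (iii) control the resulting error terms $J(\tau,\sigma)-C_0J_P(\tau)$, which requires the Dendrinos--Wright integral identity for $J_P$ and the derivative estimates of Lemmas \ref{lemma:L1'} and \ref{lemma:Id-1}; and (iv) invoke Bezout's theorem to control multiplicity of the map $\tau\mapsto\Phi(t(\tau,\sigma))$ for fixed $\sigma$. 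The gain is that the number $M+\lfloor k/2\rfloor$ of quasi-free indices plus half the adjoined variables is at most $d-1$, which is exactly where the exponent $d-1$ on $\beta_1/\alpha_1$ comes from. Without this machinery your proposed induction cannot produce the stated inequality in the regime $\beta_1\ll\alpha_1$.
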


\begin{lemma} \label{lemma:mlF}
Let $\eta > 0$, and let $E,F_1,F_2 \subset \reals^d$ have finite, nonzero measures.  Assume that for $j=1,2$ $T^*\chi_{F_j}(x) \geq \beta_j$ for all $x \in E$, where $\beta_j \gtrsim \eta \frac{\scriptt(E,F_j)}{|E|}$ and $\beta_2 \geq \beta_1$.  Assume as well that $\frac{\mathcal{T}(E,F_j)}{|F_j|} \geq \alpha_j$, where $\alpha_2\leq\alpha_1$.  Then 
\begin{align}\label{eq:mlF}
    |F_2| &\gtrsim \eta^C \alpha_1^{r_1}\alpha_2^{r_2}\beta_1^{s_1}\beta_2^{s_2},
\end{align}
for some quadruple $(r_1,r_2,s_1,s_2)$ which is taken from a finite list that depends on $d,N$ and which satisfies
\begin{align} \label{eq:ineqexp} 
 \frac{d(d-1)}{2}= r_1+r_2 , \qquad d=s_1+s_2, \qquad 0<\frac{s_2}{q_d'}-\frac{r_2}{q_d}-1.
\end{align}
The constant $C$ is allowed to depend on $N,d$ alone.
\end{lemma}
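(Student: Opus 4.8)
I would prove Lemmas \ref{lemma:mlE} and \ref{lemma:mlF} in tandem, by the iteration-and-inflation scheme of Christ \cite{ChCCC,ChQex} in the convolution form developed in \cite{DLW} and \cite{StoCCC}. Lemma \ref{lemma:mlF} is the adjoint counterpart of Lemma \ref{lemma:mlE}; the point of having two sets $F_1,F_2$ (rather than two copies of $E$) is precisely Christ's ``trilinear'' device, which converts restricted-weak-type information into the strong-type bound at the endpoint. The two homogeneity identities in \eqref{eq:ineqexp}, $r_1+r_2=\tfrac{d(d-1)}2$ and $s_1+s_2=d$, are dictated by scaling together with the relations $\mathcal T(E,F_j)\sim\alpha_j|F_j|\sim\beta_j|E|$ that hold after refinement, so the real content lies in the third, strict, inequality.

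\emph{Step 1 (normalization and refinement).} By Section \ref{sec:reductions} we may work with $T$ as in \eqref{eq:def_T2}, so $d\mu(s)=s^{2K/d(d+1)}\,ds$ and $|L_P(s)|\sim s^K$ on $I\subset(0,\infty)$ with $|I|=1$. Partition $I$ and $\reals^d$ dyadically and discard the portions carrying a negligible share of $\mathcal T(E,F_j)$; this produces uniform subsets $E'\subset E$ and $F_j'\subset F_j$ on which the densities that enter the argument --- for $x\in E'$ the $\mu$-measure of $\{s:x+P(s)\in F_j'\}$, for $y\in F_j'$ the $\mu$-measure of $\{t:y-P(t)\in E'\}$, and the pieces of these living on each dyadic $s$-interval --- are constant up to a factor of $2$. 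Because $T^*\chi_{F_j}\ge\beta_j$ holds pointwise on $E$ by hypothesis and the hypothesis $\beta_j\gtrsim\eta\,\mathcal T(E,F_j)/|E|$ limits the fluctuation of the $F_j$-side densities, one can arrange $|E'|\gtrsim\eta^{O(1)}|E|$ and $|F_j'|\gtrsim\eta^{O(1)}|F_j|$ while keeping $T^*\chi_{F_j'}\gtrsim\beta_j$ on $E'$ and $T\chi_{E'}\gtrsim\alpha_j$ on $F_j'$; the power of $\eta$ lost here becomes the factor $\eta^{C}$. After this step $\mathcal T(E',F_j')\sim\alpha_j|F_j'|\sim\beta_j|E'|$, so $|F_2|\ge|F_2'|\sim(\beta_2/\alpha_2)|E'|$ and it suffices to bound $|E'|$ from below; the target exponents for $|E'|$ then have total $\alpha$-weight $\tfrac{d(d-1)}2+1$ and total $\beta$-weight $d-1$, the same as the right-hand side of \eqref{eq:mlE}.

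\emph{Step 2 (inflation).} Fixing a base point in $E'$, one constructs a subset of $E'$ of controlled measure by flowing $d$ times: at each stage one leaves $E'$ along a curve $s\mapsto(\,\cdot\,)+P(s)$ into $F_1'$ or $F_2'$, where a set of admissible $s$ of $\mu$-measure $\gtrsim\beta_1$ or $\gtrsim\beta_2$ is available, and returns to $E'$ along a curve $t\mapsto(\,\cdot\,)-P(t)$, where a set of admissible $t$ of $\mu$-measure $\gtrsim\alpha_1$ or $\gtrsim\alpha_2$ is available. Localizing the $d$ return parameters $t_1,\dots,t_d$ to a prescribed configuration of dyadic $s$-scales, the map $(t_1,\dots,t_d)\mapsto(\,\cdot\,)$ is, after harmless changes of variable, one of the maps $\Phi^{\eps}$ of Theorem \ref{thm:DW}; its Jacobian is bounded below via part (ii) by $|J_P(t_1,\dots,t_d)|\gtrsim\prod_k|L_P(t_k)|^{1/d}\prod_{l<k}|t_l-t_k|$ and its multiplicity is at most $d!$ by part (iii). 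A change of variables turns the product of per-stage densities into a lower bound for the image measure; writing $d\mu(t)\sim|L_P(t)|^{2/d(d+1)}\,dt$ and using $\tfrac1d-\tfrac2{d(d+1)}=\tfrac{d-1}{d(d+1)}$, the residual powers of $|L_P|\sim t^K$ and the Vandermonde factor $\prod_{l<k}|t_l-t_k|$ are evaluated on the chosen scale configuration (this is where the exponent $n$ of \eqref{def:n} enters, converting $s$-scales to $\mu$-scales). Summing over the finitely many admissible configurations, and recording for each how many of the $d$ excursions pass through $F_1'$ versus $F_2'$, yields a finite list of exponent quadruples, giving \eqref{eq:mlF}.

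\emph{Main obstacle.} The crux is to verify, for \emph{every} quadruple that the case analysis of Step 2 can produce, the \emph{strict} inequality $0<\tfrac{s_2}{q_d'}-\tfrac{r_2}{q_d}-1$ (equivalently, a strict inequality among the exponents of the $|E'|$ bound). A restricted-weak-type argument would need only the non-strict version; the strictness is exactly what encodes the endpoint $(p_d,q_d)$ of Theorem \ref{thm:main} and supplies the margin that, fed into Christ's machinery, produces the Lorentz-space refinement. The inequality is sensitive to the precise weight $2K/d(d+1)$ in $d\mu$ and to the sharpness of the Jacobian bound in Theorem \ref{thm:DW}(ii), so checking it reduces to a delicate optimization over the possible scale configurations and over the distribution of the $d$ excursions between $F_1$ and $F_2$. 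That optimization, rather than the (routine) pigeonholing of Step 1 or the (bookkeeping-heavy but standard) change of variables of Step 2, is where essentially all of the work lies.
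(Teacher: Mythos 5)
Your outline correctly identifies the general Christ-type machinery (towers of excursions, Jacobian lower bounds from Theorem~\ref{thm:DW}, band structures), but it has a genuine structural gap and, by your own admission, defers the entire crux. The paper does \emph{not} reduce the bound on $|F_2|$ to a bound on $|E'|$ via $|F_2|\gtrsim(\beta_2/\alpha_2)|E'|$. It builds an \emph{odd-length} tower $\Omega_1\subset\cdots\subset\Omega_{2d-1}$ whose final map $\Phi_{2d-1}$ lands in $F_2$ (Lemma~\ref{lemma:setup_mlF}), so that $|F_2|$ is bounded directly by integrating a Jacobian over the last $d$ (or $k$) slots. Your reduction asks for an mlE-type lower bound on $|E'|$ with total $\alpha$-weight $\tfrac{d(d-1)}{2}+1$; but the proof of Lemma~\ref{lemma:mlE} leans crucially on the hypothesis $\alpha_2\geq\alpha_1$ (the paper notes explicitly that it is ``vital'' that $t_{2d}\gtrsim\alpha_1^n$ on $\Omega_{2d}$, which follows from $\alpha_2\geq\alpha_1$), whereas in Lemma~\ref{lemma:mlF} the standing assumption is $\alpha_2\leq\alpha_1$. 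An mlE-type inflation whose last return step into $E$ carries only measure $\alpha_2\leq\alpha_1$ gives a final variable with the much weaker lower bound $t\gtrsim\alpha_2^n$, and the band-structure and separation estimates (the analogues of Lemma~\ref{lemma:elim_diff} and Lemma~\ref{lemma:exists_b_struct}) degrade or fail. This is precisely why the paper routes the last excursion into $F_2$ (where $\beta_2\geq\beta_1$) rather than back into $E$.

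Relatedly, you have misplaced the role of the hypothesis $\beta_j\gtrsim\eta\,\scriptt(E,F_j)/|E|$: it is not used for a generic uniformization/pigeonholing in Step 1, but specifically to upgrade the a priori bound $t_{2d-1}\gtrsim\beta_2^n$ to $t_{2d-1}\gtrsim\eta^n\alpha_2^n$ (via the $E_B/E_G$ argument comparing $\scriptt_B(E_B,F_2)$ with $\eta\,\scriptt(E,F_2)$), which is what makes the Jacobian lower bounds usable on the final slice. Finally, the verification of the strict inequality $0<\tfrac{s_2}{q_d'}-\tfrac{r_2}{q_d}-1$ for every quadruple produced --- which you correctly flag as the heart of the matter --- is exactly the content of the case analysis the paper carries out ($\beta_1\gtrsim\alpha_1$; $\beta_1\ll\alpha_1$ with $t_{2d-1}\ll\alpha_1^n$ or $t_{2d-1}\gtrsim\alpha_1^n$; the index $2d-1$ free or not; the two-stage band structure of Lemma~\ref{lemma:2stage_b_struct}). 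Deferring that verification leaves the lemma unproved: without it one obtains at best a restricted-weak-type statement, not \eqref{eq:mlF} with \eqref{eq:ineqexp}.
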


In some sense, Lemma \ref{lemma:mlF} corresponds to a more general formulation.  The relative simplicity of Lemma \ref{lemma:mlE} is a matter of luck more than anything else.  See \cite{StoCCC} for more explanation.  We remark that Lemma \ref{lemma:mlF} is slightly different from the corresponding lemma in \cite{StoCCC}, but implies the strong-type bound (and accompanying Lorentz space improvement) by exactly the same proof.

\section{The proof of Lemma \ref{lemma:mlE}}

\subsection{Setup} 

For $i \geq 1$, define 
\[
\Phi_i(t_1,\ldots,t_i) := \sum_{j=1}^i (-1)^{j+1} P(t_i).
\]
Our first step will be to identify sets $\Omega_i$, satisfying certain helpful properties, such that for each $i$, $\Phi_i(\Omega_i)$ is contained in $F$ or one of the $E_j$.  

We assume the hypotheses of Lemma \ref{lemma:mlE} and define 
\[
\gamma_1 := \max\{\alpha_1,\beta_1\}.
\]

\begin{lemma} \label{lemma:setup_mlE}
There exist a point $x_0 \in E_1$, a constant $c > 0$, and measurable sets $\Omega_1 \subset I$, $\Omega_i \subset \Omega_{i-1} \times I$ for $2 \leq i \leq 2d$ such that
\begin{itemize}
\item $x_0 + \Phi_i(\Omega_i)$ is contained in:  $F$ if $i$ is odd, $E_2$ if $i=2d$, and $E_1$ otherwise.
\item $\mu(\Omega_1) = c\beta_1$, and if $t \in \Omega_{i-1}$, then $\mu\{t_i \in I : (t,t_i) \in \Omega_i\}$ equals:  $c\beta_1$ if $i>1$ is odd, $c\alpha_2$ if $i=2d$, and $c\alpha_1$ otherwise.
\item If $t \in \Omega_i$, then $t_i$ is greater than or equal to:  $c \gamma_1^n$ if $1 \leq i < 2d$ and $c \alpha_2^n$ if $i=2d$.  Furthermore, if $j < i<2d$, then $|t_i-t_j|$ is greater than or equal to: $c \beta_1 t_j^{-2K/d(d+1)}$ if $i$ is odd and $c \alpha_1 t_j^{-2K/d(d+1)}$ if $i<2d$ is even.  Finally, if $j < i=2d$, then $|t_i-t_j|$ is greater than or equal to $c' t_i$ if $t_j \lesssim \alpha_2^n$ and $c' \alpha_2 t_j^{-2K/d(d+1)}$ otherwise.
\end{itemize}
\end{lemma}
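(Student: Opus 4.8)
The idea: this is a greedy "inflation" construction of the kind in Christ's work and in \cite{StoCCC}, \cite{DLW}. We build $\Omega_i$ one layer at a time by a pigeonholing argument, alternating between using the hypotheses $T\chi_{E_j}(y)\geq\alpha_j$ on $F$ and $T^*\chi_F$-type information on $E_j$ coming from $\mathcal{T}(E_j,F)/|E_j|\geq\beta_j$. Let me sketch the key steps.

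**Step 1: Refine the measures so that the two-sided bounds hold robustly.** Before starting the construction I would pass to subsets on which both the lower bounds and a matching "density" lower bound hold. Since $\mathcal{T}(E_j,F)=\int_{E_j}T^*\chi_F \geq \int_F T\chi_{E_j}\geq\alpha_j|F|$, the quantities $\alpha_j,\beta_j$ are comparable to genuine averages; by a standard Chebyshev/refinement argument (as in \cite{ChCCC}) one may shrink $E_j$ and $F$ (losing only constants) so that for all $x$ in the refined $E_j$, $T^*\chi_F(x)\sim\beta_j$, and so that $|F|\gtrsim \mathcal{T}(E_j,F)/\alpha_j$, etc. This is the routine part.

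**Step 2: The inflation step.** I will construct $\Omega_i$ by induction on $i$. The base case: since $T^*\chi_F(x_0)\gtrsim\beta_1$ for a suitable $x_0\in E_1$, the set $\Omega_1:=\{s\in I: x_0+P(s)\in F\}$ has $\mu(\Omega_1)\gtrsim\beta_1$; after further refinement (using \eqref{eqn:mu(0,r)} to throw away the portion of $\Omega_1$ with $s\lesssim\gamma_1^n$, whose $\mu$-measure is $\lesssim n\gamma_1\leq n\beta_1$ times a small constant) we get $\mu(\Omega_1)=c\beta_1$ and $t_1\geq c\gamma_1^n$ on $\Omega_1$. For the inductive step, suppose $\Omega_{i-1}$ is built. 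For each $t=(t_1,\dots,t_{i-1})\in\Omega_{i-1}$ we have a point $x_0+\Phi_{i-1}(t)$ lying in $F$ (if $i-1$ even, so $i$ odd) or in $E_1$/$E_2$ (if $i-1$ odd). If $i$ is odd we use $T^*\chi_F$ at that point to find $\gtrsim\beta_1$ worth of $t_i$ with $x_0+\Phi_i(t)\in F$; wait — actually we need to be careful about signs: $\Phi_i(t)=\Phi_{i-1}(t)-(-1)^{i}P(t_i)$, and whether we're adding to a point of $E$ to land in $F$ or vice versa dictates whether we invoke $T\chi_{E_j}$ or $T^*\chi_F$. In each case the relevant hypothesis gives a lower bound ($c\beta_1$, $c\alpha_1$, or $c\alpha_2$) on the $\mu$-measure of the fiber. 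Then I refine the fiber twice more: (a) discard $t_i\lesssim\gamma_1^n$ (resp. $\lesssim\alpha_2^n$ when $i=2d$) using \eqref{eqn:mu(0,r)}; (b) discard $t_i$ lying within the prescribed distance of any earlier $t_j$ — the set of such $t_i$ has Lebesgue measure $\lesssim (i-1)\cdot(\text{the distance bound})$, and since $d\mu=s^{2K/d(d+1)}ds$ the factor $t_j^{-2K/d(d+1)}$ in the distance bounds is exactly what makes the $\mu$-measure of this excluded set $\lesssim$ (small constant)$\times$(the fiber mass). The case $i=2d$ is slightly different: the distance lower bound is $c't_{2d}$ when $t_j\lesssim\alpha_2^n$ and $c'\alpha_2 t_j^{-2K/d(d+1)}$ otherwise, reflecting that $t_{2d}\geq c\alpha_2^n$ is much smaller than the other variables. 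For the $t_j\lesssim\alpha_2^n$ case the requirement $|t_{2d}-t_j|\geq c't_{2d}$ just excludes $t_{2d}$ near such (few, small) $t_j$'s, costing a small fraction of $[0,\alpha_2^n]$; I'd check $\mu$ of that exclusion is $\ll\alpha_2$.

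**Step 3: Choosing $x_0$ and the constants uniformly.** The point $x_0$ and the single constant $c$ need to work for all $i\leq 2d$ simultaneously. Since there are only finitely many ($\lesssim 2d$) refinement steps, at each of which we lose a fixed fraction, I take $c$ small enough (depending only on $d,N$) that the cumulative losses still leave nonempty fibers, and I choose $x_0$ in the refined $E_1$ — existence of at least one good $x_0$ follows because the refined $E_1$ has positive measure (it is only a constant factor smaller than $E_1$). One subtlety: the hypothesis only gives $T\chi_{E_j}\geq\alpha_j$ on all of $F$ but $\mathcal{T}(E_j,F)/|E_j|\geq\beta_j$ is merely an average, so the "good" $x_0\in E_j$ for the $\beta_j$ bound must be chosen from a refined subset; one has to make sure the refinement of $E_1$ used for the first step is compatible with re-entering $E_1$ at later odd-to-even transitions $i=2,4,\dots,2d-2$. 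This is handled by refining once at the start and noting that the inflation always lands us in the already-refined copy.

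**Main obstacle.** The genuinely delicate point is the bookkeeping in Step 2(b): verifying that excluding $t_i$ near the earlier $t_j$'s really does cost only a small fraction of the fiber mass, \emph{uniformly} across the various regimes — in particular the interaction at $i=2d$, where $t_{2d}$ is forced to be both bounded below by $c\alpha_2^n$ and separated from the (generically much larger) coordinates $t_1,\dots,t_{2d-1}$ by $\gtrsim t_{2d}$, as well as separated from any small $t_j$'s. Getting the exponents of $\mu$ to match requires using precisely the weight $s^{2K/d(d+1)}$ and the identity \eqref{eqn:mu(0,r)}, and keeping track of which $\alpha$ or $\beta$ controls each fiber. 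Everything else is a standard finite iteration of pigeonhole-and-refine. Once Lemma \ref{lemma:setup_mlE} is in hand, Lemma \ref{lemma:mlE} will follow by pushing forward the measure $\mu^{\otimes 2d}$ restricted to $\Omega_{2d}$ under $\Phi_{2d}$ and using the Jacobian lower bound (ii) of Theorem \ref{thm:DW} together with the separation and size bounds just built in, to get the claimed lower bound on $|E_2|$; but that is the content of the next subsection, not of this lemma.
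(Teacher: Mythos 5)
Your overall plan is the same as the paper's: a Christ-style pigeonhole/inflation tower, with the fibers refined to enforce the lower bounds on $t_i$ and the separations $|t_i-t_j|$, the weight $s^{2K/d(d+1)}$ and the identity \eqref{eqn:mu(0,r)} doing exactly the bookkeeping you describe, and the $i=2d$ step treated separately using $T\chi_{E_2}\geq\alpha_2$ on $F$. Your handling of step (b) and of the $i=2d$ case matches the paper's.

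There is, however, one genuine gap, and it is located at precisely the point the paper singles out as the key observation. You propose to obtain $t_i\geq c\gamma_1^n$ by excising $[0,c\gamma_1^n]$ from each fiber, asserting the cost is ``$\lesssim n\gamma_1\leq n\beta_1$ times a small constant.'' But $\gamma_1=\max\{\alpha_1,\beta_1\}$, so $\mu([0,c\gamma_1^n])\sim c^{1/n}\gamma_1$ can vastly exceed the fiber mass: for an odd step the fiber has $\mu$-measure $\sim\beta_1$, which may be $\ll\alpha_1=\gamma_1$, and for an even step the fiber has mass $\sim\alpha_1$, which may be $\ll\beta_1=\gamma_1$. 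Since $c$ is a fixed constant while $\alpha_1/\beta_1$ is unbounded, the fiberwise excision can destroy the entire fiber. The correct move (the one the paper makes) is global rather than fiberwise: one observes that $\langle (T-T_{\gamma_1})\chi_{E_1},\chi_F\rangle\leq \mu([0,c\gamma_1^n])\min(|E_1|,|F|)\lesssim c^{1/n}\scriptt(E_1,F)$ in either case $\gamma_1=\alpha_1$ or $\gamma_1=\beta_1$, hence $\langle T_{\gamma_1}\chi_{E_1},\chi_F\rangle\gtrsim\scriptt(E_1,F)$, and then runs the entire Christ--Gressman refinement with $T_{\gamma_1}$ in place of $T$, so that every fiber produced for $i<2d$ automatically avoids $[0,c\gamma_1^n]$ while retaining the stated mass. (Your fiberwise excision is legitimate only at $i=2d$, where the threshold $\alpha_2^n$ matches the fiber mass $\alpha_2$.) Note also that establishing $t_i\geq c\gamma_1^n$ must precede your step (b), since the verification that the separation excision costs only $O(c)\cdot(\text{fiber mass})$ uses $t_i\sim t_j$ on the excluded set, which in turn uses these lower bounds together with $n(1+2K/d(d+1))=1$; you have the steps in the right order, but the justification of step (a) is what needs the repair above.
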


\begin{proof}  This is quite similar to Lemma 1 of \cite{ChCCC}, but the last item involves changes inspired by the works \cite{Gress06} and \cite{DLW}.  

First, suppose that we have proved the existence of sets $\Omega_i$ as described in the lemma for $1 \leq i \leq 2d-1$.  Then every point $t \in \Omega_{2d-1}$ corresponds to a point $y(t) = x_0 + \Phi_{2d-1}(t) \in F$.  By the assumed lower bound $T\chi_{E_2}(y) \geq \alpha_2$ on $F$, there exists a set $I(t) \subset I$ with $\mu(I(t)) \geq \alpha_2$ such that $y(t)-P(I(t)) \in E_2$. 

Next, we refine the sets $I(t)$ to guarantee the third condition of the lemma.  Since $\mu[0,c\alpha_2^n] \sim c^{1/n} \alpha_2$, we can assume that $s \geq c\alpha_2^n$ for $s \in I(t)$ while maintaining $\mu(I(t)) \gtrsim \alpha_2$.  With this assumption in place, let $j < 2d$.  If $t_j < c \alpha_2^n$, then we excise the region $[0,2c\alpha_2^n]$ from $I(t)$ if necessary.  Having done that, if $s \in I(t)$, then $s-t_j \geq s/2$.  We claim that if $t_j > c \alpha_2^n$, then $\mu(B(t_j)) := \mu\{s \in I(t) : |s-t_j| < c' \alpha_2 t_j^{-2K/d(d+1)}\} < c \alpha_2$, for $c'$ sufficiently small.  To see this, note that if $s \in B(t_j)$, then $|s-t_j| < c' \alpha_2^n < c' s$, so $s \sim t_j$ and 
\[
\mu(B(t_j)) = \int_{t_j-c'\alpha_2 t_j^{-2K/d(d+1)}}^{t_j+c'\alpha_2 t_j^{-2K/d(d+1)}} s^{2K/d(d+1)}\, ds \leq c \alpha_2 t_j^{2K/d(d+1)-2K/d(d+1)}.
\]
Thus, by removing small ``bad'' portions of each $I(t)$ if needed (this may be done while preserving measurability), we may set $\Omega_{2d} = \{(t,s) \in \Omega_{2d-1} \times I : s \in I(t)\}$, and satisfy the conclusions of the lemma.

With some modifications similar to those indicated above, the proof of the existence of sets $\Omega_i$, $1 \leq i \leq 2d-1$ is essentially the same as the proof of Lemma 1 in \cite{Gress06}.  The key observation is that $\langle T_{\gamma_1} \chi_{E_1}, \chi_{F} \rangle \gtrsim \scriptt(E_1,F)$, where 
\begin{align} \label{def:T_gamma}
T_{\gamma_1} f(x) = \int_{I \backslash [0,c\gamma_1^n]} f(x-P(s)) \, d\mu(s).
\end{align}
The proof then proceeds as in Lemma 1 of \cite{ChCCC}.
\end{proof}

\subsection{The proof when $\beta_1 \gtrsim \alpha_1$.}

Let $t_0 \in \Omega_d$, and let $\omega := \{t \in I^d : (t_0,t) \in \Omega_{2d}\}$.  For consistency of notation, we will refer to elements of $\omega$ as $t = (t_{d+1},\ldots,t_{2d})$.  

In this section, we will prove the following lemma.

\begin{lemma}\label{lemma:initial_lb}
Let $r_d = 1+ 3 + \cdots + d-1$ if $d$ is even and $r_d = 2 + 4 + \cdots + d-1$ if $d$ is odd.  Then
\[
|E_2| \gtrsim \alpha_1^{d(d+1)/2}(\beta_1/\alpha_1)^{r_d} (\alpha_2/\alpha_1)^{(d+1)/2+n(d-1)/2}
\]
\end{lemma}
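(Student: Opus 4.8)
The plan is to exhibit a subset of $E_2$ as the image of $\omega$ under a polynomial map built from $P$, and then to bound the measure of that image from below using the area formula together with parts (ii) and (iii) of Theorem~\ref{thm:DW}. Fix the point $t_0=(t_1,\dots,t_d)\in\Omega_d$, write $y_0:=x_0+\Phi_d(t_0)$, and define $\Psi:\omega\to\reals^d$ by $\Psi(t_{d+1},\dots,t_{2d}):=y_0+\sum_{j=d+1}^{2d}(-1)^{j+1}P(t_j)$. By the first bullet of Lemma~\ref{lemma:setup_mlE}, $\Psi(t)=x_0+\Phi_{2d}(t_0,t)\in E_2$ for every $t\in\omega$, so $|E_2|\ge|\Psi(\omega)|$. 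The differential of $\Psi$ in the variables $t_{d+1},\dots,t_{2d}$ has columns $(-1)^{j+1}P'(t_j)$, so $|\det D\Psi(t)|=|J_P(t_{d+1},\dots,t_{2d})|$; since (after an obvious relabeling of variables) $\Psi$ differs from a map of the form $\Phi^{\eps}$ only by the translation $y_0$, and $\omega\subset I^d$, part (iii) of Theorem~\ref{thm:DW} shows that $\Psi$ has fibers of cardinality at most $d!$. The area formula then gives
\[
|E_2|\ \gtrsim\ \int_\omega |J_P(t_{d+1},\dots,t_{2d})|\,dt.
\]

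Applying part (ii) of Theorem~\ref{thm:DW} on $I^d$ and the normalization $|L_P(s)|\sim s^K$ on $I$ yields
\[
|E_2|\ \gtrsim\ \int_\omega\Big(\prod_{k=d+1}^{2d}t_k^{K/d}\Big)\Big(\prod_{d+1\le l<k\le2d}|t_k-t_l|\Big)\,dt .
\]
It remains to bound this integral below. I would integrate the variables in the order $t_{2d},t_{2d-1},\dots,t_{d+1}$, which respects the tower structure $\Omega_i\subset\Omega_{i-1}\times I$ from Lemma~\ref{lemma:setup_mlE}, so each one-dimensional integral is over the corresponding slice, whose $\mu$-measure is $\gtrsim\alpha_2$ (for $t_{2d}$), $\gtrsim\beta_1$ (for $t_i$, $i$ odd), and $\gtrsim\alpha_1$ (for $t_i$, $i<2d$ even). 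When integrating out $t_i$ with $d+1\le i<2d$, the relevant factors are $t_i^{K/d}$, the separation factors $|t_i-t_l|$ with $d+1\le l<i$, for which I would use $|t_i-t_l|\gtrsim\beta_1\,t_l^{-2K/d(d+1)}$ or $\alpha_1\,t_l^{-2K/d(d+1)}$ according to the parity of $i$, and the powers of $t_i$ accumulated from the earlier integrations of $t_k$, $k>i$; on the surviving power of $t_i$ I would use $t_i\gtrsim\gamma_1^n$, noting that in the regime $\beta_1\gtrsim\alpha_1$ we have $\gamma_1\sim\beta_1$. The integration of $t_{2d}$ is the same in spirit but uses the last sentence of the third bullet of Lemma~\ref{lemma:setup_mlE}: each $|t_{2d}-t_l|$ is either $\gtrsim t_{2d}$ (when $t_l\lesssim\alpha_2^n$), which I would absorb into the power of $t_{2d}$, or $\gtrsim\alpha_2\,t_l^{-2K/d(d+1)}$, combined with $t_{2d}\gtrsim\alpha_2^n$ and $\mu(\omega_{2d})\gtrsim\alpha_2$.

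Collecting powers, each odd index $i\in\{d+1,\dots,2d-1\}$ contributes $i-d$ powers of $\beta_1$ (one from the slice measure and $i-1-d$ from the separation factors $|t_i-t_l|$, $l<i$), which accounts for the exponent $r_d=\sum_{\substack{d+1\le i\le 2d-1\\ i\ \mathrm{odd}}}(i-d)$ on $\beta_1/\alpha_1$; the remaining powers of $\alpha_1$ come from the even indices and from evaluating the accumulated monomials at $t_l\gtrsim\gamma_1^n$, and the powers of $\alpha_2$ come from the $t_{2d}$ integration. Crucially, all explicit dependence on $K$ must cancel — for this one uses the identity $\tfrac{2K}{d(d+1)}=\tfrac1n-1$ — leaving precisely $\alpha_1^{d(d+1)/2}(\beta_1/\alpha_1)^{r_d}(\alpha_2/\alpha_1)^{(d+1)/2+n(d-1)/2}$.

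I expect the main obstacle to be the bookkeeping in this iterated integration: one must track, for every pair $(l,k)$, which lower bound of Lemma~\ref{lemma:setup_mlE} is invoked; verify that the power of $t_i$ remaining at each stage is nonnegative wherever $t_i\gtrsim\gamma_1^n$ is used (and otherwise dispose of the excess, using separation factors of the form $|t_i-t_l|\sim t_i$ to restore nonnegativity); handle the dichotomy $t_l\lesssim\alpha_2^n$ versus $t_l\gtrsim\alpha_2^n$ in the $t_{2d}$ step, possibly by a further splitting into the sub-cases $\beta_1\gtrsim\alpha_2$ and $\beta_1\lesssim\alpha_2$; and confirm that the $K$-dependent exponents cancel to yield a bound with exponents depending on $d$ alone. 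Everything else is a routine application of Theorem~\ref{thm:DW} and Lemma~\ref{lemma:setup_mlE}.
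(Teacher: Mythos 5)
Your reduction of $|E_2|$ to $\int_\omega |J_P(t_{d+1},\dots,t_{2d})|\,dt$ --- the fiber bound via (iii) of Theorem~\ref{thm:DW} and the pointwise lower bound via (ii) together with $|L_P(s)|\sim s^K$ --- is exactly the paper's first step, and your accounting of where the powers of $\beta_1$ (giving $r_d$) and of $\alpha_2$ come from is consistent with the final answer. The gap is in the second half, and it is precisely the bookkeeping you flag as the ``main obstacle'': it genuinely breaks if you use the separation bounds of Lemma~\ref{lemma:setup_mlE} in their asymmetric form. Relative to $d\mu^d(t)=\prod_k t_k^{2K/d(d+1)}\,dt$, the integrand is $\prod_k t_k^{K(d-1)/d(d+1)}\prod_{l<k}|t_k-t_l|$. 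If for each pair $l<k$ you invoke $|t_k-t_l|\gtrsim c_k\, t_l^{-2K/d(d+1)}$, then the index $l=d+1$, which is the smaller index in $d-1$ pairs, is left with the exponent $K(d-1)/d(d+1)-2K(d-1)/d(d+1)=-K(d-1)/d(d+1)<0$. A negative leftover exponent requires an \emph{upper} bound on $t_{d+1}$, not the lower bound $t_{d+1}\gtrsim\gamma_1^n$, and no such upper bound is available: after the normalizations of Section~\ref{sec:reductions}, $I$ is only known to be a unit-length subinterval of $(0,\infty)$. Your proposed escape, ``use $|t_i-t_l|\sim t_i$ to restore nonnegativity,'' is only valid when $t_l\not\sim t_i$, and you do not say how to handle the comparable case, so as written the plan does not close.

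The missing ingredient is the paper's Lemma~\ref{lemma:elim_diff}, which \emph{symmetrizes} each pair bound before any integration: for every $t\in\omega$ and every $j<k$ one has $|t_k-t_j|\gtrsim c_k\,(t_k t_j)^{-K/d(d+1)}$, with $c_k=\beta_1$ for $k$ odd, $c_k=\alpha_1$ for $k<2d$ even, and $c_{2d}=\alpha_2^{(1+n)/2}\alpha_1^{(1-n)/2}$. This is proved by exactly the dichotomy you gesture at: if $t_j\not\sim t_k$ then $|t_k-t_j|\gtrsim\max\{t_j,t_k\}$ and one converts to the symmetric form using the lower bounds on \emph{both} coordinates from Lemma~\ref{lemma:setup_mlE} (together with $2K/d(d+1)=1/n-1$); if $t_j\sim t_k$ one replaces $t_j^{-2K/d(d+1)}$ by $(t_kt_j)^{-K/d(d+1)}$ for free. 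With the symmetric form every index lies in exactly $d-1$ pairs, so it picks up exactly $t^{-K(d-1)/d(d+1)}$, which cancels the leftover $t^{K(d-1)/d(d+1)}$ identically; the integrand collapses to the $\mu^d$-density, and no iterated integration is needed --- one finishes with the single measure bound $\mu^d(\omega)\gtrsim\alpha_1^{\lceil d/2\rceil}\beta_1^{\lfloor d/2\rfloor}(\alpha_2/\alpha_1)$. I recommend you prove the symmetrized pairwise bound as a separate lemma and then substitute it wholesale, rather than integrating variable by variable.
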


Note that $r_d \geq d-1$, so when $\beta_1 \gtrsim \alpha_1$, the conclusion of Lemma \ref{lemma:initial_lb} implies Lemma \ref{lemma:mlE}.

\begin{proof}[Proof of Lemma \ref{lemma:initial_lb}]  Let $\Phi(t) := x_0 + \Phi_{2d}(t_0,t)$.  Note that (iii) of Theorem \ref{thm:DW} implies that 
\[
|E_2| \gtrsim \int_{\omega} | \det D\Phi(t)|\, dt = \int_{\omega} |J_P(t)|\, dt.
\]
Hence, by (i--ii) of Theorem \ref{thm:DW} and the fact that $\omega \subset I^d$,
\begin{align} \label{ineq:initial_lb}
|E_2|  \gtrsim \int_{\omega} \prod_{k=d+1}^{2d} \prod_{d<l<k} t_k^{K/d(d-1)}t_l^{K/d(d-1)}|t_k-t_l|\, dt.
\end{align}
We know that $\mu^d(\omega) \gtrsim \alpha_1^{\lceil d/2 \rceil}\beta_1^{\lfloor d/2 \rfloor}(\alpha_2/\alpha_1)$, and our next task will be prove a lower bound for \eqref{ineq:initial_lb} which involves integrals with respect to $\mu$.  It will suffice to prove the following:

\begin{lemma}\label{lemma:elim_diff} 
For every $t \in \omega$ and $k \leq 2d$,
\begin{itemize}
\item If $k$ is odd and $j<k$, then:  $|t_k-t_j| \gtrsim \beta_1 (t_k\cdot t_j)^{-K/d(d+1)}$.
\item If $k<2d$ is even and $j < k$, then:  $|t_k-t_j| \gtrsim \alpha_1 (t_k\cdot t_j)^{-K/d(d+1)}$.
\item Finally, if $j<k=2d$, then:  $|t_k-t_j| \gtrsim \alpha_2^{(1+n)/2} \alpha_1^{(1-n)/2}(t_k\cdot t_j)^{-K/d(d+1)}$.
\end{itemize}
\end{lemma}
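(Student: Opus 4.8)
The plan is to read off all three inequalities from the one-sided lower bounds already furnished by Lemma~\ref{lemma:setup_mlE}, which control $|t_k-t_j|$ in terms of a negative power of the \emph{earlier} coordinate $t_j$ alone; the content of the present lemma is the purely algebraic task of replacing $t_j^{-2K/d(d+1)}$ by the symmetric weight $(t_kt_j)^{-K/d(d+1)}$, and these symmetric weights are what will let one bound the integrand of \eqref{ineq:initial_lb} below by a product of $\mu$-integrals. The one ingredient doing the work is the definition \eqref{def:n} of $n$ (equivalently, the normalization \eqref{eqn:mu(0,r)}): it amounts to the identities $s^{2K/d(d+1)}\,s^n=s$, hence also $(s^n)^{2K/d(d+1)}=s^{1-n}$ and $nK/d(d+1)=(1-n)/2$. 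Apart from this I use only $\gamma_1=\max\{\alpha_1,\beta_1\}$, the hypothesis $\alpha_2\ge\alpha_1$ of Lemma~\ref{lemma:mlE}, and the coordinate lower bounds in the first sentence of the third bullet of Lemma~\ref{lemma:setup_mlE}: because $\Omega_{2d}\subset\Omega_{2d-1}\times I\subset\cdots$, every coordinate $t_i$ of a point of $\Omega_{2d}$ satisfies $t_i\gtrsim\gamma_1^n$ when $i<2d$, and $t_{2d}\gtrsim\alpha_2^n$.

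Fix $j<k\le 2d$, let $\delta_k$ be the target weight in front of $(t_kt_j)^{-K/d(d+1)}$ (so $\delta_k=\beta_1$ if $k$ is odd, $\delta_k=\alpha_1$ if $k<2d$ is even, $\delta_{2d}=\alpha_2^{(1+n)/2}\alpha_1^{(1-n)/2}$), and note $\alpha_1\le\delta_k\le\gamma_1$ when $k<2d$ and $\alpha_1\le\delta_{2d}\le\alpha_2$ when $k=2d$. I would split according to whether $t_j\le 2t_k$ or $t_j>2t_k$. If $t_j\le 2t_k$ then $t_j^{-2K/d(d+1)}\gtrsim(t_kt_j)^{-K/d(d+1)}$, so whichever of the one-sided bounds of Lemma~\ref{lemma:setup_mlE} is in force (for $k=2d$, the form $|t_{2d}-t_j|\gtrsim\alpha_2 t_j^{-2K/d(d+1)}$, available when $t_j\gtrsim\alpha_2^n$) immediately gives $|t_k-t_j|\gtrsim\delta_k(t_kt_j)^{-K/d(d+1)}$, using $\alpha_2\ge\alpha_2^{(1+n)/2}\alpha_1^{(1-n)/2}$ in the last case. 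If instead $t_j>2t_k$ then the $t_j$-weight is useless and I simply use $|t_k-t_j|=t_j-t_k>t_j/2$; since both $t_j,t_k$ then exceed a fixed multiple of $\gamma_1^n$ (resp.\ of $\alpha_2^n$ if $k=2d$, because $t_j>2t_{2d}\gtrsim\alpha_2^n$), the identity yields $(t_kt_j)^{-K/d(d+1)}\lesssim\gamma_1^{-(1-n)}$ (resp.\ $\alpha_2^{-(1-n)}$), while $|t_k-t_j|\gtrsim\gamma_1^n=\gamma_1\cdot\gamma_1^{-(1-n)}\ge\delta_k\gamma_1^{-(1-n)}$ (resp.\ $\alpha_2^n=\alpha_2\cdot\alpha_2^{-(1-n)}\ge\delta_{2d}\alpha_2^{-(1-n)}$).

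The single configuration not reached above is $k=2d$ with $t_j\lesssim\alpha_2^n$, where Lemma~\ref{lemma:setup_mlE} supplies only $|t_{2d}-t_j|\gtrsim t_{2d}$; but then $t_j\lesssim\alpha_2^n\lesssim t_{2d}$, and from $|t_{2d}-t_j|\gtrsim t_{2d}$ together with $t_{2d}\gtrsim\alpha_2^n$ and $t_j\gtrsim\gamma_1^n\ge\alpha_1^n$ and $nK/d(d+1)=(1-n)/2$, the desired inequality reduces to $\alpha_2^n\gtrsim\alpha_2^{(1+n)/2}\alpha_1^{(1-n)/2}(\alpha_2\alpha_1)^{-(1-n)/2}=\alpha_2^n$, which holds. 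In fact every one of these subcases, once the identity is applied, comes down to an exact matching of exponents, the residual inequality always following from $\gamma_1\ge\max\{\alpha_1,\beta_1\}$ and $\alpha_2\ge\alpha_1$; this exact balancing is precisely why $n$ must be taken as in \eqref{def:n}. I expect the only real difficulty to be the bookkeeping — keeping straight the parity of $k$ and the exceptional behaviour of the index $2d$, whose lower bound comes in two pieces — rather than anything analytic; the whole argument is organized around the single dichotomy $t_j\le 2t_k$ versus $t_j>2t_k$.
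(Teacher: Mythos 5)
Your proof is correct and takes essentially the same route as the paper's: a dichotomy on the relative sizes of $t_j$ and $t_k$, using the one-sided lower bounds of Lemma~\ref{lemma:setup_mlE} when $t_j\lesssim t_k$ and the coordinate lower bounds $t_i\gtrsim\gamma_1^n$ (resp.\ $t_{2d}\gtrsim\alpha_2^n$) together with the identity $nK/d(d+1)=(1-n)/2$ when $t_j\gg t_k$. The only blemish is the parenthetical claim $\alpha_1\le\delta_k$ for $k<2d$, which is false for odd $k$ when $\beta_1<\alpha_1$, but it is never used, so the argument stands.
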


Separating the integrand in \eqref{ineq:initial_lb} into a product over even $k$ times a product over odd $k$, and manipulating the inequalities in Lemma \ref{lemma:elim_diff}, we see that
\begin{align*}
|E_2| &\gtrsim \alpha_1^{r_d} \beta_1^{r_{d-1}} (\frac{\alpha_2}{\alpha_1})^{(n+1)(d-1)/2} \int_{\omega} \prod_{k=d+1}^{2d} t_k^{2K/d(d+1)}\, dt  \\ &\gtrsim \alpha_1^{d(d+1)/2}(\frac{\beta_1}{\alpha_1})^{r_d} (\frac{\alpha_2}{\alpha_1})^{(d+1)/2 + n(d-1)/2}.
\end{align*}
The last inequality also requires a little algebra.
\end{proof}

\begin{proof}[Proof of Lemma \ref{lemma:elim_diff}.]  
We will prove the lemma when $k=2d$.  The proof when $k<2d$ is similar, but a little simpler.  

Suppose that $t_j \ll t_k$ or $t_j \gg t_k$.  Then by the triangle inequality, $|t_k-t_j| \gtrsim t_k$, and the lower bounds in Lemma \ref{lemma:setup_mlE} imply that $t_k^{1+K/d(d+1)}t_j^{K/d(d+1)} \gtrsim \alpha_2^{(1+n)/2}\alpha_1^{(1-n)/2}$.  

If $t_j \sim t_k$, then $|t_j-t_k| \gtrsim \alpha_2 t_j^{-2K/d(d+1)} \sim \alpha_2 (t_kt_j)^{-K/d(d+1)}$ and the claimed inequality follows from $\alpha_2 \geq \alpha_1$.
\end{proof}

This completes the proof of Lemma \ref{lemma:mlE} in the case $\beta_1 \gtrsim \alpha_1$.

\subsection{A combinatorial argument.}\label{subsec:combin}

To prove Lemma \ref{lemma:mlE} in the remaining case, $\beta_1 \ll \alpha_1$, we modify the ``band structure'' argument of Christ in \cite{ChCCC} to accommodate the weight $s^{2K/d(d+1)}$ in the measure $\mu$.  The modifications we make are inspired by the arguments of \cite{DLW}.  We will follow Christ's construction as closely as possible.

\begin{defn*} Let $\Gamma$ be a finite set of positive integers, called indices.  A band structure on $\Gamma$ is a partition of $\Gamma$ into subsets called ``bands''.  Given a band structure on $\Gamma$, we designate the indices in $\Gamma$ as free, quasi-free, or bound as follows:
\begin{itemize}
\item The least index of each band is free.
\item An index is quasi-free if it is the larger element of a two-element band; it is quasi-bound (not bound) to the smaller (free) element of that band.  
\item An index is bound (to the free element of its band) if it is one of two or more non-free elements of some band.
\end{itemize}
\end{defn*}

\begin{lemma}\label{lemma:exists_b_struct} Let $\eps>0$.  Then there exist parameters $\delta, \delta'$ satisfying $c_{d,\eps} < \delta' < \eps \delta < \eps c$, an integer $d \leq k < 2d$, an element $t_0 \in \Omega_{2d-k}$, a set $\omega \subset \{t:(t_0,t) \in \Omega_{2d}\}$ with $\mu^{k}(\omega) \sim \alpha_1^{\lceil k/2 \rceil} \beta_1^{\lfloor k/2 \rfloor} (\alpha_2/\alpha_1)$, and a band structure on $[2d-k+1,2d]$ such that the following properties hold:
\begin{enumerate}
\item There are exactly $d$ free or quasi-free indices.  In particular, each even index is free.
\item $|t_i-t_j| > \delta \alpha_1 (t_i \cdot t_j)^{-K/d(d+1)}$, unless $i$ and $j$ lie in the same band.
\item $c_0 \beta_1 (t_i \cdot t_j)^{-K/d(d+1)} < |t_i-t_j| < \delta \alpha_1 (t_i \cdot t_j)^{-K/d(d+1)}$ whenever $i$ is quasi-bound to $j$.
\item $\delta' \alpha_1 (t_i \cdot t_j)^{-K/d(d+1)} > |t_i-t_j|$ whenever $i$ is bound to $j$.
\end{enumerate}
\end{lemma}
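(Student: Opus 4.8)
The plan is to follow Christ's iterative pruning/partitioning scheme from \cite{ChCCC}, modified to carry the weight $(t_i t_j)^{-K/d(d+1)}$ in all the separation conditions rather than plain differences $|t_i - t_j|$. We start from the set $\omega \subset \{t : (t_0, t) \in \Omega_{2d}\}$ with $t_0 \in \Omega_d$ as produced by Lemma~\ref{lemma:setup_mlE}, so that by construction we already have the one-sided lower bounds $|t_i - t_j| \geq c\beta_1 t_j^{-2K/d(d+1)}$ (for $i$ odd) and $\geq c\alpha_1 t_j^{-2K/d(d+1)}$ (for $i < 2d$ even), with the analogous statement for $i = 2d$; since $t_i \sim t_j$ whenever the pair is not already widely separated, these are equivalent (up to constants absorbed into $c_0$) to lower bounds of the form $c_0\beta_1(t_i t_j)^{-K/d(d+1)}$ or $c_0\alpha_1(t_i t_j)^{-K/d(d+1)}$. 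So property (iii)'s lower bound and the ``unless same band'' clause of (ii) are already within reach before any combinatorics.

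The band structure itself is built by a finite recursion on the threshold $\delta \alpha_1 (t_i t_j)^{-K/d(d+1)}$. Fix a rapidly decreasing finite sequence of scales $c = \delta_0 \gg \delta_1 \gg \cdots$; at each stage, declare two indices to lie in the same band if their $\mu$-weighted separation is below the current scale $\delta_m \alpha_1$, take the transitive closure to get a genuine partition, and pass to a subset of $\omega$ of comparable $\mu^k$-measure on which the picture is ``stable'' at the next finer scale $\delta_{m+1}$ — i.e. no new mergings occur between $\delta_m$ and $\delta_{m+1}$. The pigeonhole input is that $\mu^k(\omega) \sim \alpha_1^{\lceil k/2\rceil}\beta_1^{\lfloor k/2\rfloor}(\alpha_2/\alpha_1)$, so after boundedly many refinements we still retain a set of comparable measure; here is where one lowers $k$ from $2d$ down to some $d \le k < 2d$ and correspondingly slices off coordinates by fixing $t_0 \in \Omega_{2d-k}$, exactly as in \cite{ChCCC}. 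One iterates until the number of free-or-quasi-free indices stabilizes; the claim is that this stable number is exactly $d$. The upper bound $\leq d$ comes from the Jacobian/nonvanishing content of Theorem~\ref{thm:DW}(ii)–(iii) together with the measure $\mu^k(\omega)$: too many ``independent'' (free/quasi-free) coordinates would force $|E_2|$ larger than it is. The lower bound $\geq d$ — equivalently that we never collapse below $d$ independent parameters — uses that each even index sits, by the construction in Lemma~\ref{lemma:setup_mlE}, at separation $\gtrsim \alpha_1(t_i t_j)^{-K/d(d+1)}$ from all smaller indices, so an even index can never be bound or quasi-bound; it is always free, and there are $d$ even indices among $[2d-k+1,2d]$ when $k \geq d$... more precisely one checks the count works out to force each even index free and pins the total at $d$. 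That gives (i). Properties (ii) and (iv) are then just the definitions of the stable band structure at scales $\delta$ and $\delta'$ with $\delta' \ll \delta$.

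The main obstacle I expect is the bookkeeping that forces the free/quasi-free count to be exactly $d$ while simultaneously controlling the measure of the surviving $\omega$ and the index-interval length $k$. The two bounds pull in opposite directions: many free indices make the map $\Phi$ nondegenerate in many directions (pushing $|E_2|$ up, contradicting its given size once $k$ is large enough), but the weighted-separation hypotheses on even indices from Lemma~\ref{lemma:setup_mlE} prevent the count from dropping. Getting both to land on $d$ is the crux, and it is where the choice of which $k \in [d, 2d)$ to stop at gets made — one has to verify that the measure $\alpha_1^{\lceil k/2\rceil}\beta_1^{\lfloor k/2\rfloor}(\alpha_2/\alpha_1)$ is preserved under all the refinements for \emph{some} admissible $k$, not that it works for $k = 2d$. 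A secondary technical nuisance is maintaining measurability of the sets $\omega$ through the transitive-closure/stabilization steps, but as in Lemma~\ref{lemma:setup_mlE} this can be arranged. Throughout, the weight is handled by the observation that on the relevant range either $t_i \sim t_j$ (so the weight is a harmless constant times $t_j^{-2K/d(d+1)}$) or $|t_i - t_j| \gtrsim \max(t_i,t_j)$ already, so the substitution of $(t_i t_j)^{-K/d(d+1)}$ for the one-variable weights costs only absolute constants, which get absorbed into $c_0, c_{d,\eps}$.
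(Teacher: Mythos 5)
Your overall strategy is the right one and is the same as the paper's: adapt Christ's band construction from \cite{ChCCC} (and \cite{StoCCC}), replacing unweighted gaps by the weighted gaps $|t_i-t_j|(t_it_j)^{K/d(d+1)}$, using that within a band $t_i\sim t_j$ (so the two-variable weight is interchangeable with $t_j^{-2K/d(d+1)}$, which is where $t_i\gtrsim\alpha_1^n$, hence $\alpha_2\geq\alpha_1$, enters), and that the lower bounds inherited from Lemma~\ref{lemma:setup_mlE}/Lemma~\ref{lemma:elim_diff} force every even index to be the least element of its band, hence free. All of that matches the paper's (itself only sketched) argument.

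The gap is at the point you yourself call the crux: why the free-plus-quasi-free count lands on exactly $d$. Your proposed upper bound --- that more than $d$ free or quasi-free indices ``would force $|E_2|$ larger than it is'' --- is not an argument one can run: we are \emph{proving} a lower bound on $|E_2|$, so a stronger lower bound is not absurd, and with more than $d$ free directions the change-of-variables/Bezout step into $\reals^d$ cannot even be set up, so no such bound is produced. Your lower-bound count is also arithmetically wrong: among $[2d-k+1,2d]$ there are $\lceil k/2\rceil$ even indices, not $d$, so for $k<2d-1$ the even indices alone do not force $\nu\geq d$. The correct mechanism is that the value $d$ is \emph{enforced by the iteration}, not deduced afterwards: at $k=2d$ all $d$ even indices are free, so $\nu\geq d$; whenever $\nu>d$ one freezes the lowest remaining coordinate (choosing $t_0$ by pigeonhole to retain comparable conditional measure), which decreases $k$ by one and changes $\nu$ by at most one; since $\nu\leq k$ always, the process must pass through $\nu=d$ before $k$ can drop below $d$, and it does so after $O(d)$ steps, which is also what keeps $\delta'>c_{d,\eps}$ after the finitely many adjustments of $\delta,\delta'$ needed to maintain the gap between the quasi-bound and bound thresholds. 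Without this, conclusion (i) --- which is exactly what Lemma~\ref{lemma:lb_J} and the final exponent count $M+\lfloor k/2\rfloor\leq d-1$ require --- is not established.
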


In our application of the lemma, the parameter $\eps$ will be the value whose existence is guaranteed by the forthcoming Lemma \ref{lemma:lb_J}.  

\begin{proof}[Sketch of proof of Lemma \ref{lemma:exists_b_struct}.]  Because of the lower bounds from Lemma \ref{lemma:elim_diff} and the fact that $\alpha_2 \geq \alpha_1$, the proof of this lemma follows from arguments similar to those in \cite{StoCCC}, which are in turn slight modifications of arguments in \cite{ChCCC}.  We sketch the beginnings of the modified argument.  

There exists $\omega_0 \subset \Omega_{2d}$ with $|\omega_0| \gtrsim |\Omega_{2d}|$ and a permutation $\sigma \in \rm{Perm}_{2d}$ (where $\Perm_j$ is the set of permutations on $j$ indices) such that 
\[
t_{\sigma(1)} < t_{\sigma(2)} < \cdots < t_{\sigma(2d)}
\]
on $\omega_0$.  Initially set $\delta =  \frac{c_{N,d}}{2d}$, where $c_{N,d}$ is some small constant to be determined below.  Refining $\omega_0$ if necessary, but retaining the above lower bound on $|\omega_0|$, there exists a sequence $1=L_1 < L_2 < \cdots < L_R \leq 2d$ such that $t_{\sigma(i-1)} < t_{\sigma(i)} - \delta \alpha_1 (t_{\sigma(i-1)}t_{\sigma(i)})^{-K/d(d+1)}$ if and only if $i=L_j$ for some $1 < j \leq R$.  

After this first step of the decomposition, our bands are 
\[
\{\sigma(L_1)=\sigma(1),\ldots, \sigma(L_2-1)\}, \{\sigma(L_2),\ldots,\sigma(L_3-1)\},\ldots,\{\sigma(L_R),\ldots,\sigma(2d)\}.
\]

Assume that $i<j$ and $\sigma(i),\sigma(j)$ are in the same band.  Then by induction,
\begin{align*}
t_{\sigma(j)} > t_{\sigma(i)} &\geq t_{\sigma(j)} - \delta \alpha_1 [(t_{\sigma(i)}t_{\sigma(i+1)})^{-K/d(d+1)} + \cdots  + (t_{\sigma(j-1)}t_{\sigma(j)})^{-K/d(d+1)}]\\
& > t_{\sigma(j)} - 2d \delta\alpha_1 t_{\sigma(i)}^{-2K/d(d+1)} \\
& \geq t_{\sigma(j)} - c_{N,d} \alpha_1t_{\sigma(i)}^{-2K/d(d+1)} \geq t_{\sigma(j)} - {c_{N,d}}' \alpha_1^n .
\end{align*}
The inequality on the second line follows from the monotonicity of the $t_{\sigma(\cdot)}$ and the second inequality on the last line follows from $t_{\sigma(i)} \gtrsim \alpha_1^n$ (recall $\alpha_2 > \alpha_1$) and a bit of algebra.  For $c_{N,d}$ sufficiently small (depending only on $N,d$), the second inequality on that line implies that $|t_{\sigma(i)}-t_{\sigma(j)}| \ll \min\{t_{\sigma(i)},t_{\sigma(j)}\}$, which in turn implies that $t_{\sigma(i)} \sim t_{\sigma(j)}$ (we will use this fact several times in the coming pages).    Returning to the first line of the sequence of inequalities, 
\[
t_{\sigma(j)} > t_{\sigma(i)} > t_{\sigma(j)} -  c_{N,d} \alpha_1 (t_{\sigma(i)}t_{\sigma(j)})^{-K/d(d+1)}.
\]
From this and the lower bounds in Lemma \ref{lemma:elim_diff}, the maximum of $\sigma(i)$ and $\sigma(j)$ cannot be even.  Thus an even index is always the minimum element of a band containing it, and in particular, no band contains two or more even indices after the first step.

Assume that $i<j$ and $\sigma(i),\sigma(j)$ are in different bands.  Say $\sigma(i)$ lies in the band
\[
\{\sigma(L_{a-1}),\ldots, \sigma(L_a -1)\}.
\]
Then $j \geq L_a$ and 
\begin{align*}
t_{\sigma(j)}-t_{\sigma(i)} &= \sum_{k=i}^{j-1} t_{\sigma(k+1)}-t_{\sigma(k)} \geq t_{\sigma(L_a)} - t_{\sigma(L_a - 1)} \\
& \geq \delta \alpha_1 (t_{\sigma(L_a)}t_{\sigma(L_a - 1)})^{-K/d(d+1)} \gtrsim \delta \alpha_1 (t_{\sigma(j)}t_{\sigma(i)})^{-K/d(d+1)}.
\end{align*}
The first inequality on the second line follows from the definition of the sequence $L_a$.  The second inequality follows from the monotonicity of the $t_{\sigma(\cdot)}$ and the fact that $t_{\sigma(i)} \sim t_{\sigma(L_a - 1)}$ because both lie in the same band (this was proved in the previous paragraph).

Note that it is vital that $\alpha_2 \geq \alpha_1$, so that $t_{2d} \gtrsim \alpha_1^n$ on $\Omega_{2d}$.

We have explained the first step of an iterative procedure which terminates after $O(d)$ steps.  In each step of this procedure, the quantities $\delta,\delta'$ are decreased, but this is only done finitely many times and in a way which guarantees the lower bound $c_{d,\eps} < \delta'$.  For the remainder of the algorithm, we refer the reader to \cite{StoCCC} (the arguments there are adapted from \cite{ChCCC}) with the caveat that adjustments as above will be necessary.  
\end{proof}

To state the next lemma, we need a little further notation.

Suppose $\omega$, $k$, and $t_0$ are as in the conclusion of Lemma \ref{lemma:exists_b_struct}.  With $t \in \reals^k$ denoted by $(t_{2d-k+1},\ldots,t_{2d})$, define $\Phi(t) := x_0 + \Phi_{2d}(t_0,t)$.  Thus $\Phi(\omega) \subset E_2$.  

Let $\Lambda \subset [2d-k+1,2d]$ be the set of free or quasi-free indices.  Given $i \in [2d-k+1,2d] \backslash \Lambda$, let $B(i)$ denote the index to which $i$ is bound.  We define three mappings:
\begin{align*}
\reals^{d} \ni \tau(t), \,\, \tau_i(t) &= t_i, \, \text{ for $i \in \Lambda$}\\
\reals^{k-d} \ni s(t), \,\, s_j(t) &= t_j-t_{B(j)}, \, \text{ for $j \notin \Lambda$}\\
\reals^{k-d} \ni \sigma(t), \,\, \sigma_j(t) &= s_j(t) [\tau_{B(j)}(t)]^{2K/d(d+1)}.
\end{align*}
The function $t \mapsto (\tau,\sigma)$ is invertible for $t \in (0,\infty)^k$.  We let $t(\tau,\sigma)$ denote its inverse, and define
\[
J(\tau,\sigma) := \det ( D_{\tau} \Phi(t(\tau,\sigma)).
\]

\begin{lemma}\label{lemma:lb_J}  There exists $\eps > 0$ such that given any $k$, any band structure on $[2d-k+1,2d]$, and any $\omega$ satisfying the conclusion of Lemma \ref{lemma:exists_b_struct}, the following lower bound holds for all $(\tau,\sigma) \in \reals^k$ such that $t(\tau,\sigma) \in \omega$:
\begin{align}\label{ineq:lb_J}
|J(\tau,\sigma)| \geq c \alpha_1^{d(d-1)/2}(\frac{\beta_1}{\alpha_1})^M(\frac{\alpha_2}{\alpha_1})^{(1+n)(d-1)/2} \prod_{j=1}^d\tau_j^{2K/d(d+1)},
\end{align}
for some $c>0$, where $M$ is the number of quasi-free indices in the band structure.  Here the values of $\eps, c$ depend only on $d$, $N$, and the constant $c_0$ in (iii) of Lemma \ref{lemma:exists_b_struct} (which in turn depends on $d,N$).
\end{lemma}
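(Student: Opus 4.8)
The plan is to reduce \eqref{ineq:lb_J} to the single inequality $|\det D_\tau\Phi(t)|\gtrsim |J_P(\tau)|$, valid for all $t=t(\tau,\sigma)\in\omega$ with constant depending only on $d$ and $N$, and then to bound $|J_P(\tau)|$ from below by feeding the separation estimates of the band structure into Theorem~\ref{thm:DW}(ii). For the Jacobian, $\partial_{t_i}\Phi=(-1)^{i+1}P'(t_i)$, while the bound coordinates are the explicit functions $t_j=\tau_{B(j)}+\sigma_j\tau_{B(j)}^{-2K/d(d+1)}$ of $\tau$ (with $\sigma$ frozen), so the column of $D_\tau\Phi$ indexed by $m\in\Lambda$ is
\[
\mathrm{col}_m=(-1)^{m+1}P'(t_m)+\sum_{j\,:\,B(j)=m}(-1)^{j+1}\Bigl(1-\tfrac{2K}{d(d+1)}\,\tfrac{t_j-t_m}{t_m}\Bigr)P'(t_j).
\]
For singleton and quasi-free indices the sum is empty and $\mathrm{col}_m=\pm P'(t_m)$, so in the absence of ``big'' bands one has $|\det D_\tau\Phi|=|J_P(\tau)|$ exactly.

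The work is therefore to handle a big band, one with a free index $m$ and bound indices $i_1,\dots,i_r$, necessarily with $r\ge 2$ since a band carrying bound indices carries at least two of them. By (1) of Lemma~\ref{lemma:exists_b_struct} every even index is free, hence is least in its band, so a band has at most one even index and all of $i_1,\dots,i_r$ are odd; the signs then collapse and $\mathrm{col}_m=\lambda_m P'(t_m)+v_m$, where $\lambda_m=(-1)^{m+1}+r$ is a nonzero integer in $[1,2d]$ and $v_m$ is, modulo a harmless multiple of $P'(t_m)$, a linear combination of the differences $P'(t_{i_l})-P'(t_m)$ and of the small multiples $\tfrac{t_{i_l}-t_m}{t_m}P'(t_{i_l})$.

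The heart of the matter is that $v_m$ is negligible. Since $|L_P(t_m)|\sim t_m^K>0$ by Theorem~\ref{thm:DW}(i), the vectors $P'(t_m),P''(t_m),\dots,P^{(d)}(t_m)$ form a basis of $\reals^d$ whose change-of-frame coefficients are bounded by powers of $t_m$; expanding $P'(t_{i_l})$ about $t_m$ in that basis writes $v_m=\sum_{a\ge 2}c_{m,a}P^{(a)}(t_m)$ with $|c_{m,a}|\lesssim\max_l|t_{i_l}-t_m|^{a-1}$, and by (4) of Lemma~\ref{lemma:exists_b_struct} the gap $|t_{i_l}-t_m|<\delta'\alpha_1(t_{i_l}t_m)^{-K/d(d+1)}$ is smaller by a factor $\delta'/\delta<\eps$ than the between-band separation granted by (2). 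Expanding $\det D_\tau\Phi$ multilinearly in the big-band columns, the leading term is $(\prod_m\lambda_m)\,J_P(\tau)$; in every remaining term one or more columns $P'(t_m)$ has been traded for a scalar multiple of some $P^{(a)}(t_m)$, $a\ge 2$, and a confluent-Vandermonde comparison — which again rests on $|L_P|\sim s^K$ on $I$, as in \cite{ChCCC}, \cite{Gress06}, \cite{StoCCC} — bounds each such determinant by $O(\eps)\,|J_P(\tau)|$, the accompanying power of $t_m$ merging into the factor $\prod_j\tau_j^{2K/d(d+1)}$ on the right of \eqref{ineq:lb_J}. Taking $\eps$ small (depending only on $d$, $N$, and $c_0$) absorbs all the errors and gives $|\det D_\tau\Phi|\gtrsim |J_P(\tau)|$.

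It then remains to bound $|J_P(\tau)|$ from below, which is routine: by Theorem~\ref{thm:DW}(ii), $|J_P(\tau)|\gtrsim\prod_{m\in\Lambda}t_m^{K/d}\prod_{m<m'\text{ in }\Lambda}|t_m-t_{m'}|$, and the $\binom{d}{2}$ differences are estimated using the band structure — each contributes $\gtrsim\alpha_1(t_mt_{m'})^{-K/d(d+1)}$ by (2), except that the $M$ pairs lying inside a two-element band give only $\gtrsim\beta_1(t_mt_{m'})^{-K/d(d+1)}$ by (3), and the $d-1$ pairs involving the index $2d$ (which, being even and maximal, is a singleton band, hence pairs across bands with every other index of $\Lambda$) give the improved bound $\gtrsim\alpha_2^{(1+n)/2}\alpha_1^{(1-n)/2}(t_mt_{m'})^{-K/d(d+1)}$ of the third bullet of Lemma~\ref{lemma:elim_diff}. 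Collecting powers of each $t_m$ (here $\tfrac{K}{d}-\tfrac{K(d-1)}{d(d+1)}=\tfrac{2K}{d(d+1)}$) yields exactly the right-hand side of \eqref{ineq:lb_J}. I expect the third paragraph to be the main obstacle: $J_P(\tau)$ is already a product of tiny spacings, so a small \emph{absolute} perturbation of one of its columns could a priori overwhelm it, and one must show the bound indices contribute negligibly with a constant depending only on $d$ and $N$; this is precisely where the scale gap $\delta'\ll\delta$ built into Lemma~\ref{lemma:exists_b_struct} and the polynomial (indeed, essentially monomial) structure of $P$ on $I$ come into play.
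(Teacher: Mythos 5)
Your overall architecture matches the paper's: isolate the main term $C_0J_P(\tau)$ with $|C_0|\sim 1$ (your parity analysis of the $\lambda_m$ is the paper's observation that $\theta_j\neq 0$), show the contributions of the bound indices are $O(\eps)|J_P(\tau)|$, and then bound $|J_P(\tau)|$ from below by feeding the band separations and the third bullet of Lemma~\ref{lemma:elim_diff} into Theorem~\ref{thm:DW}(ii). That last paragraph is correct and is essentially what the paper does.

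The gap is exactly where you predicted it: the ``confluent-Vandermonde comparison'' is asserted, not proved, and the assertion that it ``rests on $|L_P|\sim s^K$ on $I$'' is not adequate for a general polynomial $P$. Two concrete problems. First, your expansion of $P'(t_{i_l})$ about $t_m$ produces $P^{(a)}(t_m)$ for $a$ up to $N$, and when $N>d$ re-expressing $P^{(a)}(t_m)$, $a>d$, in the frame $P'(t_m),\dots,P^{(d)}(t_m)$ requires inverting a matrix whose determinant is $L_P(t_m)\sim t_m^K$; the resulting coefficients carry a factor $t_m^{-K}$ and are not ``bounded by powers of $t_m$'' in any sign-favorable sense, so the claimed bound $|c_{m,a}|\lesssim\max_l|t_{i_l}-t_m|^{a-1}$ does not follow. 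Second, and more fundamentally, comparing $\det(\dots,P^{(a)}(t_m),\dots)$ to $|J_P(\tau)|$ with the loss of only the expected factors $\prod_{m'}|t_m-t_{m'}|$ is standard for the moment curve and for monomial curves (\cite{ChCCC}, \cite{Gress06}, \cite{StoCCC}), but for general $P$ it requires simultaneous control of \emph{all} the minors $L_1,\dots,L_{d-1}$ on $I$, not just $L_d=L_P$. This is precisely what the paper supplies via the Dendrinos--Wright identity $J_P=J_d$ (the iterated-integral representation of Section~\ref{sec:geom_id}) together with Lemmas~\ref{lemma:L1'} and~\ref{lemma:Id-1}, whose proofs occupy Section~\ref{sec:pf_technical_lemmas} and depend on the detailed structure of the decomposition algorithm in \cite{DW} (in fact the paper must slightly modify that algorithm to get Lemma~\ref{lemma:L1'}). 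The paper also sidesteps isolating individual $P^{(a)}$ columns by writing each error determinant as an integral of first derivatives of $J_P$ over intervals of length $|s_i|$, which is what converts the derivative bounds of Lemmas~\ref{lemma:L1'} and~\ref{lemma:Id-1} into the $O(\eps)$ gain via \eqref{ineq:si_tauj} and \eqref{ineq:si_tauj-tauj'}. Without some substitute for this machinery, your central estimate $|\det D_\tau\Phi|\gtrsim|J_P(\tau)|$ is not established.
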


\begin{proof}[Completion of proof of Lemma \ref{lemma:mlE}]

The remainder of the proof is quite similar to the argument from \cite{ChCCC}.  Let $k$, $\omega$, etc., be as in Lemma \ref{lemma:exists_b_struct}, and fix $\sigma \in \reals^{k-d}$.  Let $\omega_{\sigma} = \{\tau : t(\tau,\sigma) \in \omega\}$.  By Bezout's theorem (as stated in \cite{Shaf}, for a similar application see \cite{ChCCC}), for each $\sigma$, under the map $\omega \ni \tau \mapsto \Phi(t(\tau,\sigma))\in E_2$, points $x$ lying off a set of measure zero have at most $C_{N,d}$ preimages.  Therefore
\begin{align}\label{ineq:local8}
|E_2| \gtrsim \int_{\omega_{\sigma}} |J(\tau,\sigma)| \, d\tau 
\gtrsim  \alpha_1^{d(d-1)/2}(\beta_1/\alpha_1)^M (\alpha_2/\alpha_1)^{(1+n)(d-1)/2} \mu^d(\omega_{\sigma}).
\end{align}
It is easy to check that if $t(\tau,\sigma) \in \omega$, then $|\sigma| \lesssim \alpha_1$.  Integrating both sides of \eqref{ineq:local8} over $|\sigma| \lesssim \alpha_1$,
\begin{align} \label{local87}
\alpha_1^{k-d}|E_2| \gtrsim \alpha_1^{d(d-1)/2}(\beta_1/\alpha_1)^M (\alpha_2/\alpha_1)^{(1+n)(d-1)/2} \int_{t(\tau,\sigma) \in \omega} d\mu^d(\tau)\,d\sigma.
\end{align}
We switch the order of integration and make the change of variables $(\tau,\sigma) \mapsto t$ to show that the integral on the right of \eqref{local87} equals
\begin{align} \label{ineq:local86}
\int_{\omega} \prod_{j \notin \Lambda} t_{B(j)}^{2K/d(d+1)} dt_j \prod_{i \in \Lambda} t_i^{2K/d(d+1)} dt_i.
\end{align}
We showed above (in the proof of Lemma \ref{lemma:exists_b_struct}) that $t_i \sim t_j$ whenever $i$ is bound to $j$ and $t \in \omega$.  Approximating the $t_{B(j)}$ in \eqref{ineq:local86} by $t_j$, we then have
\begin{align} \label{ineq:local85}
\alpha_1^{k-d} |E_2| \gtrsim \alpha_1^{d(d-1)/2}(\beta_1/\alpha_1)^M (\alpha_2/\alpha_1)^{(1+n)(d-1)/2} \int_{\omega} d\mu^k(t).
\end{align}
Next, applying (i) of Lemma \ref{lemma:exists_b_struct} and \eqref{ineq:local85}, we have shown that
\[
|E_2| \gtrsim \alpha_1^{d(d+1)/2} (\frac{\beta_1}{\alpha_1})^{M+\lfloor k/2 \rfloor}(\alpha_2/\alpha_1)^{(d+1)/2 + n(d-1)/2}.
\]
Finally, at least $\lfloor k/2 \rfloor + 1$ indices are free (the even indices together with the first index), and $M$ plus the number of free indices equals $d$.  Therefore $M + \lfloor k/2 \rfloor \leq d-1$, and we have proved Lemma \ref{lemma:mlE} in the remaining case, $\beta_1 \ll \alpha_1$.
\end{proof}

\section{The proof of Lemma \ref{lemma:lb_J}}
Now we return to the lower bound on $J(\tau,\sigma)$.

We recall some important information about the interval $I$ in \eqref{eq:def_T2}.  The interval came from the decomposition in Theorem \ref{thm:DW} and is contained in $(0,\infty)$ by the reductions in Section~\ref{sec:reductions}.  In addition, the corresponding quantity $b$ equals zero.  

Analysis similar to that in \cite{ChCCC} can be used to write $J(\tau,s)$ in a more helpful form.  We summarize the argument.  Note that
\begin{align*}
\Phi(t(\tau,\sigma)) &= z_0 + \sum_{j \in \Lambda} [(-1)^{j+1}P(\tau_j) + \sum_{i \Rightarrow j} (-1)^{i+1}P(\tau_j + s_i)] \\
&= z_0 + \sum_{j \in \Lambda} [\theta_j P(\tau_j) + \sum_{i \Rightarrow j}(-1)^{i+1}(P(\tau_j+s_i)-P(\tau_j))],
\end{align*}
where $z_0$ is a constant, $s_i=\sigma_i \tau_j^{-2K/d(d+1)}$, and $i \Rightarrow j$ means $i$ is bound to $j$.  In addition, the quantity $\theta_j := (-1)^{j+1} + \sum_{i \Rightarrow j} (-1)^{i+1}$ is never zero because bound indices are always odd and because an index $j \in \Lambda$ has zero or at least two indices bound to it.  See \cite{ChCCC} for details.  

For fixed $\sigma$, we wish to compute the jacobian with respect to $\tau$.  Using the definition of $s_i$ in the previous paragraph, we have
\begin{align*}
\frac{\partial}{\partial \tau_j} [P(\tau_j + s_i(\tau_j,\sigma_i)) - P(\tau_j)] = (P'(\tau_j+s_i)-P'(\tau_j)) - \frac{2K}{d(d+1)} \frac{s_iP'(\tau_j+s_i)}{\tau_j}.
\end{align*}
Now, using multilinearity of the determinant,
\begin{align}
\label{eqn:Jtau+errors}
J(\tau,\sigma) = C_0 J_P(\tau) + \sum \text{error terms},
\end{align}
where $|C_0| = |\pm \prod_{j \in \Lambda} \theta_j|  \sim 1$.  The number of terms in the sum above is $\leq C_d$ and each error term is equal to a constant times a certain determinant, for instance
\begin{align} \label{errorterm_hybrid}
C_{k,l} \det(P'(\tau_{j_1}),\ldots,P'(\tau_{j_{k}}+s_{i(j_k)})-P'(\tau_{j_{k}}),\ldots,\frac{s_{i(j_l)}}{\tau_{j_{l}}} P'(\tau_{j_{l}}+s_{i(j_l)}),\ldots).
\end{align}
Here $\Lambda = \{j_1,\ldots,j_d\}$, each $i(j)$ is bound to $j$, either $k < l$ or $l \leq d$ (so not all entries of the matrix are of the form $P'(\tau_j)$), and
\[
|C_{k,l}| = |(\prod_{i=1}^{k-1} \theta_{j_i})(\frac{-2K}{d(d+1)})^{d-l+1}| \leq C_{d,N}.
\]

The determinants in \eqref{errorterm_hybrid} can be viewed as a hybrid of two types of error terms.  The first of these is
\begin{align} \notag
\det( P'(\tau_{j_1}),\ldots,&P'(\tau_{j_{k-1}}),P'(\tau_{j_k}+s_{i(j_k)})-P'(\tau_{j_k}),\ldots,P'(\tau_{j_d}+s_{i(j_d)})-P'(\tau_{j_d})) \\
\label{errorterm1}
&= \int_{\tau_{j_k}}^{\tau_{j_k}+s_{i(j_k)}} \cdots \int_{\tau_{j_d}}^{\tau_{j_d}+s_{i(j_d)}} \prod_{\ell = k}^d \frac{\partial}{\partial \tau_{j_{\ell}}}|_{\tau_{j_{\ell}} = t_{j_{\ell}}} J_P(\tau) \, dt_{j_d} \cdots dt_{j_k}.
\end{align}
where $k \leq d$ and $i(j)$ is bound to $j$.  The second type is
\begin{align}
\label{errorterm2}
(\prod_{j \in \Lambda'} \frac{s_{i(j)}}{\tau_j}) J_P(t_{i(1)},\ldots,t_{i(d)}).
\end{align}
In \eqref{errorterm2} $t=t(\tau,\sigma)$ and $\emptyset \neq \Lambda' \subset \Lambda$, but now $i(j)$ equals $j$ if $j \notin \Lambda'$ and is bound to $j$ otherwise.  

For clarity of exposition, we will explicitly bound the quantities \eqref{errorterm1} and \eqref{errorterm2}, but our analysis applies equally well to \eqref{errorterm_hybrid}, the hybrid of these terms.

\subsection{Aside:  A geometric identity} \label{sec:geom_id}
Before proceeding in our analysis of the error terms, we record an identity relating the jacobian $J_P$ defined in \eqref{def:J_P} to determinants of certain minors of the matrix $(P'(t),\ldots,P^{(d)}(t))$.  A proof of this identity may be found in \cite{DW}.

For $1 \leq j \leq d$, we define polynomials $L_j=L_{P,j}$ by 
\begin{align} \label{def:L_j}
L_{j}(s) := \det \left( \begin{array}{ccc} P_1'(s) &  \ldots & P_1^{(j)}(s) \\
								\vdots  &  \ddots & \vdots \\
								P_j'(s) &  \ldots & P_j^{(j)}(s)
				\end{array}\right).
\end{align}
Note that $L_d = L_P$, where $L_P$ is the polynomial defined in \eqref{def:L_P}.  

Using this, we recursively define rational functions $J_k:\reals^k \to \reals$ by 
\begin{align} \label{def:J1}
&J_1(t_1) := \frac{L_{d-2}(t_1)L_{d}(t_1)}{L_{d-1}(t_1)^2} \\
\label{def:Jr}
&J_k(t_1,\ldots,t_k) := \\ \notag
& \quad \prod_{j=1}^k \frac{L_{d-k-1}(t_j)L_{d-k+1}(t_j)}{L_{d-k}(t_j)^2} \int_{t_1}^{t_2} \cdots \int_{t_{k-1}}^{t_k} J_{k-1}(s_1,\ldots,s_{k-1})\, ds_1 \cdots ds_{k-1}.
\end{align}
The convention $L_0 \equiv L_{-1} \equiv 1$ is required to define $J_{d-1}$ and $J_d$.  

The algorithm in \cite{DW} begins with an initial decomposition 
\[
\reals = \bigcup_{j=1}^{C_{N,d}} \overline{I_j},
\]
where the $I_j$ are disjoint open intervals, and on each $I_j$, the polynomials $L_1,\ldots,L_{d}$ are all single-signed.  Then for $t = (t_1,\ldots,t_d) \in I_j^d$, we have the identity
\begin{align} \label{eqn:J_P_integral}
J_P(t) = J_d(t).
\end{align}

\subsection{Back to the proof of Lemma \ref{lemma:lb_J}}

We examine a typical instance of \eqref{errorterm1}.  For ease of notation, let the $\tau_j$ be indexed by $j \in \{1,\ldots,d\}$ instead of $\Lambda$.  We expand the identity \eqref{eqn:J_P_integral} 
\begin{align} \label{eqn:J_P_integral2}
J_P(\tau) &= (\prod_{j=1}^d L_{1}(\tau_j) ) \int_{\tau_1}^{\tau_2} \cdots \int_{\tau_{d-1}}^{\tau_d} J_{d-1}(s_1,\ldots,s_{d-1})\, ds_1 \cdots ds_{d-1}.
\end{align}
Let 
\[
h(\tau) := \prod_{j=1}^d L_1(\tau_j) \qquad H(\tau) := J_P(\tau)/h(\tau).  
\]
Then
\begin{align}\label{eq:DJP}
 (\prod_{j \in \Lambda'} &\frac{\partial}{\partial \tau_j}) J_P(\tau) = \sum_{\Lambda'' \subset \Lambda'} T_1 \cdot T_2 :=  \sum_{\Lambda'' \subset \Lambda'} (\prod_{j \in \Lambda' \backslash \Lambda''} \frac{\partial}{\partial \tau_j}) h(\tau) \cdot  (\prod_{j \in \Lambda''} \frac{\partial}{\partial \tau_j})  H(\tau).
\end{align}
We will use the following lemmas to bound $T_1$ and $T_2$ in a typical term from the above sum.

\begin{lemma} \label{lemma:L1'}  With $I\subset (0,\infty)$ and $b=0$ coming from Theorem \ref{thm:DW}, whenever $s \in I$
\begin{align} \label{ineq:L1'}
|{L_1}'(s)| \lesssim \frac{|L_1(s)|}{s}.
\end{align}
\end{lemma}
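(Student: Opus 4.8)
The plan is to prove the pointwise bound $|L_1'(s)| \lesssim |L_1(s)|/s$ on $I$ by exploiting the fact that $L_1$ is a polynomial (it is a sum of products of entries of the $d\times d$ matrix $(P'(s),\dots,P^{(d)}(s))$, hence a polynomial in $s$) together with the crucial location of $I$: after the reductions of Section~\ref{sec:reductions}, $I \subset (0,\infty)$, $b=0$, $|I|=1$, and $|L_P(s)| = |L_d(s)| \sim s^K$ on $I$. The key point is that a polynomial that behaves like $s^K$ (to within constants) on an interval of length $1$ bounded away from $0$ in a controlled way must have $0$ as a zero of multiplicity essentially accounting for its size near the left endpoint.

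First I would recall from Theorem~\ref{thm:DW}'s construction (via the algorithm of \cite{DW}, as described in Section~\ref{sec:geom_id}) that on each subinterval $I_j$ the polynomials $L_1,\dots,L_d$ are single-signed, so in particular $L_1$ has no zeros in the open interval $I$. Write $L_1(s) = c\, s^{m} \prod_{\ell}(s - \zeta_\ell)$ over $\cpx$, where $m \geq 0$ is the multiplicity of $0$ as a root and the $\zeta_\ell$ are the nonzero roots (with multiplicity), so that $\deg L_1 = m + \#\{\zeta_\ell\} \leq C_{N,d}$. Then logarithmic differentiation gives
\begin{align} \label{eqn:logderiv}
\frac{L_1'(s)}{L_1(s)} = \frac{m}{s} + \sum_{\ell} \frac{1}{s - \zeta_\ell}.
\end{align}
The first term is exactly of the desired form, so it remains to show $\sum_{\ell} |s - \zeta_\ell|^{-1} \lesssim 1/s$ for $s \in I$, i.e. that each nonzero root $\zeta_\ell$ satisfies $|s - \zeta_\ell| \gtrsim s$ uniformly on $I$. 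Since the number of roots is $O_{N,d}(1)$, it suffices to bound each summand.

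The main obstacle, and the heart of the matter, is ruling out nonzero roots $\zeta_\ell$ that are close to $I$. I would argue this using the size information on $L_d = L_P$, $L_{d-1}$, and $L_{d-2}$ near the endpoints of $I$, combined with the structure of the Dendrinos--Wright decomposition. The cleanest route: $I = I_j$ was translated so that $b_j = 0$ and $I \subset (0,\infty)$, and $|L_P(s)| \sim s^K$ holds \emph{uniformly on $I$} with constants depending only on $d,N$; since $|I| = 1$, this forces $\dist(0, I) \gtrsim 1$ OR else $K$-th power behavior, and in either case one checks that $s \sim \dist(s, \partial I') $ up to constants where $I'$ is a fixed dilate. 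More precisely, because the decomposition $\reals = \bigcup \overline{I_j}$ has only $C_{N,d}$ pieces and each $L_i$ is single-signed on each $I_j$, any real root of $L_1$ lies at an endpoint of some $I_j$; but rescaling so that $|I_j|=1$ with $0$ an endpoint means any \emph{other} endpoint is at distance $\gtrsim 1 \gtrsim s$ for $s\in I$ (as $s \leq 1 + \dist(0,I) \lesssim 1$ on the unit interval $I$, and $s \gtrsim \dist(0,I)$ trivially), handling the real roots. For the complex roots $\zeta_\ell = a_\ell + i b_\ell$ with $b_\ell \neq 0$, one uses that $|L_1(s)|$ would have to dip near $s = a_\ell$ if $|b_\ell|$ were small, contradicting single-signedness combined with the polynomial identities \eqref{def:J1}--\eqref{def:Jr} that tie $L_1$ to $L_{d-2}, L_d$ via $J_1 = L_{d-2}L_d/L_{d-1}^2$; alternatively, and more robustly, I would invoke that all of $L_1, \dots, L_d$ having bounded degree and being single-signed on $I$ with $|I|=1$ forces, by a compactness/normalization argument in the space of bounded-degree polynomials, a uniform lower bound $\dist(\zeta_\ell, I) \gtrsim 1$ for nonzero complex roots after the affine normalization — giving $|s - \zeta_\ell| \gtrsim 1 \gtrsim s$. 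Feeding this back into \eqref{eqn:logderiv} yields $|L_1'(s)/L_1(s)| \lesssim m/s + C_{N,d} \lesssim 1/s$ on $I$ (using $s \lesssim 1$), which is \eqref{ineq:L1'}. I expect the delicate point to be making the "no nearby complex roots" step quantitative with constants depending only on $d$ and $N$; the honest fix is that this is precisely the kind of statement the Dendrinos--Wright algorithm is built to deliver on each $I_j$, so I would extract it from their construction rather than reprove it.
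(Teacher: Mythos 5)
Your skeleton is the right one and matches the paper's in spirit: write $L_1'(s)/L_1(s)$ as a sum of $1/(s-\zeta_\ell)$ over the roots and reduce to showing $|s-\zeta_\ell|\gtrsim |s-b|=s$ for every root $\zeta_\ell$ of $L_1$, real or complex. But the two justifications you offer for that key claim are not correct. For real roots, you assert that after normalizing $|I|=1$ every other endpoint is at distance $\gtrsim 1$; nothing prevents a real root of $L_1$ from sitting at an endpoint of $I$ itself (single-signedness only excludes roots from the open interval), in which case $|s-\zeta|\ll s$ near that endpoint and the bound fails. For complex roots, the proposed ``compactness/normalization'' argument is false: $L_1(s)=(s-\tfrac12)^2+\eps^2$ has bounded degree, is single-signed on all of $\reals$, yet has roots at distance $\eps$ from $I=(0,1)$, with $\eps$ uncontrolled by any of the normalizations in play (the constant $A_j=1$ is imposed on $L_d=L_P$, not on $L_1$). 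So single-signedness plus bounded degree simply does not deliver $\dist(\zeta_\ell,I)\gtrsim 1$.

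What actually delivers $|s-\zeta_\ell|\gtrsim|s-b|$ is the structure of the Dendrinos--Wright decomposition itself, and this is precisely how the paper proceeds: the \fbox{$D1$} step guarantees $|s-b_i|\le|s-\eta_k|$ for all roots $\eta_k$, and the gap case of \fbox{$D2$} guarantees $|s-b-\beta_k|\gtrsim|s-b|$, so after Step~1 one has $|L_1'(s)|\lesssim|L_1(s)|/|s-b_1|$ by the product rule. The remaining content --- which you do not address --- is propagating this bound from $b_1$ to the final center $b=b_{d-1}=0$: the paper must \emph{modify} the Dendrinos--Wright algorithm so that in the dyadic case of each later step the previously chosen $b_0,\dots,b_n$ are fed into \fbox{$D1$}, forcing $|s-b_{n+1}|\le|s-b_n|$ and hence monotonicity of the bound through the induction. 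Your closing remark that one should ``extract it from their construction'' points in the right direction, but as written the proposal replaces the one genuinely nontrivial step with two incorrect arguments and a deferral, so the proof is incomplete.
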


\begin{lemma} \label{lemma:Id-1} With $I,b$ as in Lemma \ref{lemma:L1'}, whenever $\tau \in I^d$
\begin{align} \label{ineq:DJd}
(\prod_{j \in \Lambda''} \frac{\partial}{\partial \tau_j}) H(\tau) \lesssim \sum_{\delta,\bfi} \frac{|H(\tau)|}{\prod_{j \in \Lambda''} \tau_j^{\delta_j} |\tau_j-\tau_{\bfi(j)}|^{1-\delta_j}},
\end{align}
where the sum is taken over $\delta \in \{0,1\}^{\Lambda''}$ and functions $\bfi:\Lambda'' \to \Lambda$ with $\bfi(j) \neq j$, for all $j \in \Lambda''$.
\end{lemma}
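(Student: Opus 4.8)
The plan is to iterate the product rule, tracking at each differentiation step how the quantity $H(\tau) = J_P(\tau)/h(\tau)$ transforms. The natural object to differentiate is not $H$ directly but the nested-integral representation that comes from \eqref{eqn:J_P_integral2}: dividing by $h(\tau) = \prod_j L_1(\tau_j)$ strips off exactly the outer product of $L_1$'s, leaving
\[
H(\tau) = \int_{\tau_1}^{\tau_2}\cdots\int_{\tau_{d-1}}^{\tau_d} J_{d-1}(s_1,\ldots,s_{d-1})\, ds_1\cdots ds_{d-1}.
\]
Differentiating with respect to one $\tau_j$ with $1<j<d$ produces, via the fundamental theorem of calculus applied to the two integration limits in which $\tau_j$ appears, a difference of two nested integrals of one fewer dimension, each with $s_{j-1}$ or $s_j$ frozen at $\tau_j$; differentiating with respect to $\tau_1$ or $\tau_d$ produces a single such term. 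Schematically, $\partial_{\tau_j} H$ is a sum of terms of the form (a lower-dimensional nested integral of a product of $J_{d-1}$-type integrands) evaluated at a configuration where $\tau_j$ has been identified with a neighboring variable.

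First I would set up the bookkeeping: after differentiating in all the variables $j \in \Lambda''$, one obtains a bounded number (at most $2^{|\Lambda''|}$) of terms, each of which is a nested integral over the variables $s_k$ that were not eliminated, of a product of the rational prefactors $\frac{L_{d-k-1}L_{d-k+1}}{L_{d-k}^2}$ from the recursion \eqref{def:Jr}. The key point is that each differentiation $\partial_{\tau_j}$ collapses one integration variable and, by the mean value theorem applied to the resulting difference quotient over an interval of length comparable to $|\tau_j - \tau_{\bfi(j)}|$, gains a factor bounded by $1/|\tau_j - \tau_{\bfi(j)}|$ times a value of the integrand at an intermediate point — this is the $\delta_j = 0$ case. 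Alternatively, one may instead differentiate a prefactor $\frac{L_{d-k-1}(\tau_j)L_{d-k+1}(\tau_j)}{L_{d-k}(\tau_j)^2}$, and here Lemma~\ref{lemma:L1'} (more precisely, its analogue for each $L_m$, which follows the same way on $I$ since $b = 0$ and all $L_m$ are single-signed there) gives that $|\partial_{\tau_j}$ of such a ratio$|\lesssim (\text{ratio})/\tau_j$ — this is the $\delta_j = 1$ case. Expanding the product rule over these two options for each $j \in \Lambda''$ yields exactly the asserted sum over $\delta \in \{0,1\}^{\Lambda''}$ and $\bfi : \Lambda'' \to \Lambda$.

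The main obstacle is controlling the intermediate integrand values uniformly: when a variable $s_k$ is frozen at an intermediate point between two $\tau$'s (rather than at a $\tau$ itself), the resulting expression is no longer literally $H(\tau)$ but a nested integral with one integrand sampled off the diagonal, and one must argue that it is still $\lesssim |H(\tau)|$. Here I would use that on $I$ the polynomials $L_1,\ldots,L_d$ are single-signed (from the decomposition recalled in Section~\ref{sec:geom_id}), so each rational prefactor in $J_{d-1}$ does not change sign, the integrand $J_{d-1}$ is single-signed, and hence the nested integral is monotone in its limits; comparing the intermediate configuration to the extreme one and using $\tau_j \sim \tau_k$ for indices in a common band (established in the proof of Lemma~\ref{lemma:exists_b_struct}) lets one absorb the ratio of prefactors evaluated at nearby points into the implicit constant. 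Once this uniform comparison is in hand, reassembling the terms and recognizing $J_P(\tau)/h(\tau) = H(\tau)$ gives \eqref{ineq:DJd}. I expect the verification that $|L_m'(s)| \lesssim |L_m(s)|/s$ on $I$ for each $m$ (the same computation as Lemma~\ref{lemma:L1'}, using that $L_m$ is a polynomial single-signed on an interval whose closure may contain $0$ but whose associated vanishing point $b$ is $0$) to be routine but essential, and I would state it as a companion to Lemma~\ref{lemma:L1'}.
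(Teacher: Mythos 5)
Your starting point coincides with the paper's: writing $H$ as the nested integral $\int_{\tau_1}^{\tau_2}\cdots\int_{\tau_{d-1}}^{\tau_d}J_{d-1}$, so that $\prod_{j\in\Lambda''}\partial_{\tau_j}H$ becomes, by the fundamental theorem of calculus, a lower-dimensional nested integral of $J_{d-1}$ with the arguments indexed by $\Lambda''$ frozen at $\tau_{\Lambda''}$. But the step that carries all the weight --- comparing this frozen integral to $|H(\tau)|$ divided by $\prod_j\tau_j^{\delta_j}|\tau_j-\tau_{\bfi(j)}|^{1-\delta_j}$ --- is not established by the tools you invoke. The mean value theorem gives $\int_{\tau_{j-1}}^{\tau_j}g=g(\xi)\,|\tau_j-\tau_{j-1}|$ for some interior $\xi$, i.e.\ it controls the average by an interior value; what you need is the reverse comparison, that the value of the (single-signed) integrand at the endpoint $\tau_j$ is dominated by its average over the interval. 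Single-signedness and monotonicity of the nested integral in its limits do not yield this (a positive integrand concentrated near the endpoint defeats it), and the relation $\tau_j\sim\tau_k$ for indices in a common band is not available: the lemma is asserted for all $\tau\in I^d$ with no band structure hypothesized, and in the application the pairs $j,\bfi(j)$ both lie in $\Lambda$ and need not be comparable. Your mechanism for the $\delta_j=1$ terms is also misplaced: in the nested-integral representation of $H$ the rational prefactors are evaluated at the integration variables $s_i$, not at $\tau_j$, so no product-rule term of the form you describe ever arises; each $\partial_{\tau_j}$ only ever hits integration limits.

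The paper closes exactly this gap by an explicit computation. Using $|L_m(s)|\sim A_m|s|^{k_m}$ and single-signedness on $I$, the integrand $J_{d-1}$ is replaced by the monomial model $S_{d-1}$ of \cite{DW}; the full nested integral of $S_{d-1}$ is then computed inductively to be a Vandermonde-type determinant $T_d(t)=C\,\scriptp(t)\prod_{i<j}(t_j-t_i)$ with $\scriptp$ a symmetric polynomial with nonnegative coefficients, and the frozen nested integral is identified with $\prod_{j\in\Lambda''}\partial_{\tau_j}T_d$. The bound \eqref{ineq:DJd} is then read off from the product rule applied to this factorization: differentiating a factor $(t_j-t_i)$ costs $|t_j-t_i|^{-1}$ (the $\delta_j=0$ terms), while $|\partial_j\scriptp|\lesssim\scriptp/t_j$ because $\scriptp$ has nonnegative coefficients and $t_j>0$ (the $\delta_j=1$ terms). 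This explicit Vandermonde evaluation is the essential content of the proof and is absent from your argument; to salvage your route you would need at minimum a sup-versus-average inequality for the relevant bounded-degree polynomial integrands in each variable, which is a genuinely different ingredient from the mean value theorem and would still have to be iterated with care.
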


We remark that the conditions on $I$ and $b$, particularly the fact that $I,b$ are determined by the algorithm in \cite{DW}, are crucial hypotheses to these lemmas.  We postpone the proofs of Lemmas \ref{lemma:L1'} and \ref{lemma:Id-1} for a moment and finish proving Lemma \ref{lemma:lb_J}.  

By Lemma \ref{lemma:L1'}
\begin{align} \label{ineq:T1}
|T_1| \lesssim \frac{|h(\tau)|}{\prod_{l \in \Lambda' \backslash \Lambda''} \tau_l}.
\end{align}
Lemma \ref{lemma:Id-1} gives an upper bound for $|T_2|$, and combining that with \eqref{ineq:T1} and \eqref{eq:DJP}, we obtain
\[
|(\prod_{j \in \Lambda'} \frac{\partial}{\partial t_j}) J_P(t_{\Lambda'},\tau_{\Lambda'^c})| \lesssim \sum_{\Lambda'' \subset \Lambda', \delta, \bfi} \frac{|J_P(t_{\Lambda'},\tau_{\Lambda'^c})|}{\prod_{l \in \Lambda' \backslash \Lambda''} t_l \prod_{j \in \Lambda''}(t_j^{\delta_l}|t_j-u_{\bfi(j)}|^{1-\delta_l})},
\]
where $\delta,\bfi$ are as in Lemma \ref{lemma:Id-1} and $u_i=t_i$ if $i \in \Lambda'$ and $\tau_i$ if $i \in \Lambda'^c$.  We return to \eqref{errorterm1}, which is the term we want to estimate.

Let $j \in \Lambda$, let $i$ be bound to $j$, and let $j' \in \Lambda \backslash j$.  Suppose $t(\tau,s) \in \omega$.  By condition (4) of Lemma \ref{lemma:exists_b_struct} and the definition of $s_i$,
\[
|s_i| \leq \eps \delta \alpha_1 [\tau_j\cdot(\tau_j+s_i)]^{-K/d(d+1)}.
\]
Since $t(\tau,s) \in \omega$, both $\tau_j=t_j$ and $\tau_j+s_i=t_i$ are $\gtrsim \alpha_1^n$ (recall $\alpha_2 \geq \alpha_1$).  Raising these lower bounds to the negative power $-K/d(d+1)$ gives an upper bound on $|s_i|$ and using $\tau_j \gtrsim \alpha_1^n$ again implies
\begin{align} \label{ineq:si_tauj}
|s_i| \lesssim \eps \delta \alpha_1^n \lesssim \eps \tau_j.
\end{align}
This implies the inequality (already noted above)
\begin{align} \label{sim:ti_tj}
\tau_j \sim \tau_j+s_i.
\end{align}

We next compare $|\tau_j-\tau_{j'}|$ and $|s_i|$.  If $\tau_{j'} \ll \tau_j$ or if $\tau_j \ll \tau_{j'}$, then 
\[
|\tau_j-\tau_{j'}| \gtrsim \tau_j \gtrsim \eps^{-1} |s_i|,
\]
by \eqref{ineq:si_tauj}.  Say $\tau_j \sim \tau_{j'}$.  Since $j$ and $j'$ must lie in different bands, approximating $\tau_{j'}$ by $\tau_j$ in conclusion (iv) of Lemma \ref{lemma:exists_b_struct},
\[
|\tau_j-\tau_{j'}| \gtrsim \delta \alpha_1 \tau_j^{-2K/d(d+1)} \sim \delta \alpha_1 [\tau_j(\tau_j+s_i)]^{-K/d(d+1)},
\]
where the second estimate follows from \eqref{sim:ti_tj}.  Thus, regardless of the relative sizes of $\tau_j,\tau_{j'}$,
\begin{align} \label{ineq:si_tauj-tauj'}
|\tau_j-\tau_{j'}| \gtrsim  \eps^{-1} |s_i|.
\end{align}
Therefore, we may estimate the integrand in \eqref{errorterm1} by the constant
\[
\sum \frac{|J_P(\tau)|}{\prod_{j \in \Lambda' \backslash \Lambda''} \tau_j \prod_{l \in \Lambda''}(\tau_l^{\delta_l}|\tau_l-\tau_{l'}|^{1-\delta_l})},
\]
which implies that the error term in \eqref{errorterm1} is 
\[
\leq \sum \frac{|J_P(\tau)|\prod_{j \in \Lambda'}|s_j|}{\prod_{j \in \Lambda' \backslash \Lambda''} \tau_j \prod_{l \in \Lambda''}(\tau_l^{\delta_l}|\tau_l-\tau_{l'}|^{1-\delta_l})} \lesssim \eps^{|\Lambda'|} |J_P(\tau)|,
\]
by \eqref{ineq:si_tauj} and \eqref{ineq:si_tauj-tauj'}.

We now have everything we need to estimate \eqref{errorterm2} as well.  By our bound on \eqref{errorterm1}, 
\[
J_P(t_{i(1)},\ldots,t_{i(d)}) \sim J_P(\tau_1,\ldots.\tau_d),
\]
and by \eqref{ineq:si_tauj}, $|\prod_{j \in \Lambda'} s_{i(j)}/\tau_j| \lesssim \eps^{|\Lambda'|}$.

Combining these two estimates, $J(\tau,\sigma) = CJ_P(\tau) + O(\eps)J_P(\tau)$.  It remains to show that $J_P(\tau)$ is bounded from below by the term on the right of \eqref{ineq:lb_J}.  By our assumptions on $I$ (in particular, the conclusions of Theorem \ref{thm:DW}), 
\begin{align*} 
|J_P(\tau)| \gtrsim \prod_{j=1}^d \tau_j^{K/d} \prod_{k<j} |\tau_j-\tau_k|.
\end{align*}
The first term on the right is in the form we want, but we need to work on the second term.  By (iv--v) of Lemma~\ref{lemma:exists_b_struct} and the fact that each quasi-free index is quasi-bound to a unique free index, we have
\begin{align*} 
\prod_{k=1}^d|\tau_k|^{K/d} \prod_{k<l} |\tau_k-\tau_l| &\gtrsim \alpha_1^{d(d-1)/2} (\frac{\beta_1}{\alpha_1})^M (\frac{\alpha_2}{\alpha_1})^{(n+1)(d-1)/2} \prod_{k=1}^d\tau_k^{K/d} \prod_{k<l} (\tau_k \cdot \tau_l)^{-K/d(d+1)} 
\end{align*}
Putting these two inequalities together and using the definition of $n$ to perform a quick computation proves the lower bound claimed in Lemma~\ref{lemma:lb_J}.

\section{The proofs of Lemmas~\ref{lemma:L1'} and~\ref{lemma:Id-1}}\label{sec:pf_technical_lemmas}

Our proofs of Lemmas~\ref{lemma:L1'} and~\ref{lemma:Id-1} are not self-contained, but rather consist of some minor adjustments and additions to the arguments of Dendrinos and Wright in Sections~5, 7,~9 of \cite{DW}.  For clarity, in this section we will provide a rough sketch of the decomposition procedure in \cite{DW} before explaining how Lemmas~\ref{lemma:L1'} and~\ref{lemma:Id-1} follow.  

We recall the identity quoted in Section~\ref{sec:geom_id}.  It will be important in what follows to note that one can prove inductively from the definition that $J_{d-1}$ is an antisymmetric function.  

Two decomposition procedures, applied iteratively, constitute the main part of the algorithm in \cite{DW}.  We describe them here.

\fbox{$D1$}:  Let $\eta_1,\ldots,\eta_{d'}$ be complex numbers and $J$ an interval.  This procedure decomposes $J = \bigcup_{i=1}^{d'} \bigcup_{j \neq i} \overline{I_{ij}}$, where the $I_{ij}$ are pairwise disjoint, and each $I_{ij}$ is a union of $O_{d'}(1)$ open intervals.  Each $I_{ij}$ is associated to the real number $b_i = \Re \eta_i$, and for each $i,j$ and $s \in I_{ij}$, 
\begin{align} \label{ineq:characterize_d1_1}
|s-b_i| \leq |s-\eta_k|, \quad 1 \leq k \leq d'\\
\notag |s-\eta_k| \sim A_{ik}|s-b_{i}|^{\delta_{ik}}.
\end{align}
Here $\delta_{ik} \in \{0,1\}$, and $A_{ik} \in \reals$.  

\fbox{$D2$}:  Let $J$ be an interval and $b$ a real number.  Let $\beta_1,\ldots,\beta_{d'}$ be (not necessarily distinct) complex numbers with $|b+\beta_1| \leq \cdots \leq |b+\beta_{d'}|$.  This procedure produces a decomposition 
\[
J = \bigcup_{j=1}^{C_{d'}} \overline{\scriptg_j} \cup \bigcup_{j=1}^{C_{d'}} \overline{\scriptd_j},
\]
where the $\scriptg_j$, $\scriptd_k$ are pairwise disjoint, and each $\scriptg_j$ and each $\scriptd_k$ is a union of $O(1)$ open intervals.  On each $\scriptg_j$ (called a gap),
\begin{align} \label{ineq:characterize_gap}
|s-b - \beta_k| \sim |\beta_k|^{1-\eps_{kj}} |s-b|^{\eps_{kj}} \qquad |s-b-\beta_k| \gtrsim |s-b|,
\end{align}
for some $\eps_{kj} \in \{0,1\}$, and on each $\scriptd_j$ (called a dyadic interval),
\[
|s-b| \sim D_j.
\]

We move now to the implementation of the procedures \fbox{$D1$} and \fbox{$D2$}.  Fix an interval $J$ coming from the initial decomposition above (thus \eqref{eqn:J_P_integral} holds).

{\it{Step 0:}}  Apply \fbox{$D1$} to $J$ with respect to the zeros $\eta_i$ of $L_{d}$.  Fix an interval $I_0$ with corresponding real number $b_0$ from this decomposition.

{\it{Step $n+1$:}}  
Assume that step $n$ has been completed ($0 \leq n \leq d-2$), leaving us with an interval $I_n$ and real number $b_n$.  Apply \fbox{$D2$} to $I_n$ with respect to $b_n$ and the zeros of $L_{n+1}(\cdot + b_n)$.  There are two possibilities for an interval $J$ from this decomposition.
{\it{Case I:}}  $J$ is a gap.  In this case, Step $n+1$ is complete and has produced an interval $I_{n+1}=J$ and real number $b_{n+1}=b_n$.  
{\it{Case II:}}  $J$ is dyadic.  We apply \fbox{$D1$} to $J$ with respect to $b_n$ and the zeros of $L_{P,n+1}$ and $b_0,b_1,\ldots,b_n$ (these are the real numbers corresponding to $I_n$ and its ancestor intervals).  We are left with interval-number pairs.  We choose one of these pairs (arbitrarily) and denote its elements $I_{n+1}$, $b_{n+1}$ to complete step $n+1$.

In either case, the decomposition has been performed so that on $I_{n+1}$,
\begin{align} \label{ineq:LPj_sim}
|L_{d}(s)| \sim A_d |s-b_{n+1}|^{k_{d}}, \: |L_{1}(s)| \sim A_1 |s-b_{n+1}|^{k_1}, \: \ldots , \: |L_{{n+1}}(s)| \sim A_{n+1}|s-b_{n+1}|^{k_{n+1}}.
\end{align}
Here, the constants $A_j$ and the non-negative integer exponents $k_j$ are allowed to change from line to line to reflect the fact that if the interval $J$ is dyadic, then $|s-b_n|$ is nearly constant.  If $|s-b_n|$ is nearly constant, then by the analogue of \eqref{ineq:LPj_sim} from Step $n$, so are $L_{d},L_{1},\ldots,L_{n}$.  

The final step ($n=d$) of the decomposition is to decompose each interval $I_{d-1}$ coming from Step $d-1$ so that none of the subintervals $I_d$ contains any of the real numbers $b_0,b_1,\ldots,b_{d-1}$ associated to $I_{d-1}$ and its ancestors.

A detailed proof of \eqref{ineq:LPj_sim} may be found in \cite{DW}.  To simplify the exposition, we have omitted some crucial details in the sketch above.  For example, linear transformations are used between steps to guarantee certain exponents $k_j$ do not arise which would prevent the deduction of (ii) of Theorem~\ref{thm:DW} from \eqref{ineq:LPj_sim} and \eqref{eqn:J_P_integral}.  We have also made a minor change in the algorithm of Dendrinos and Wright by using the real numbers $b_j$ determined in previous steps to perform the decomposition in Case II of Step $n+1$.  This alteration is miniscule, and moreover, the modified algorithm is needed only to establish the upper bound on the error terms in \eqref{eqn:Jtau+errors}.  Having bounded those error terms, one can use the original algorithm in \cite{DW} to prove Theorem~\ref{thm:DW}, so this change does not cause any technical issues to arise.  In fact, the modified algorithm could also be employed in the proof of Theorem~\ref{thm:DW}.  

\begin{proof}[Proof of Lemma~\ref{lemma:L1'}]
After $d-1$ steps as above, we select an interval $I$ with corresponding real number $b$.  From \eqref{ineq:LPj_sim}, we know that for $s \in I$,
\[
|L_{1}(s)| \sim A_1 |s-b|^{k_1}.
\]
To prove the lemma (wherein things have been arranged so $b=0$, $I\subset(0,\infty)$), we must show that
\begin{align} \label{ineq:L1'_goal}
|L_{1}'(s)| \lesssim \frac{|L_{1}(s)|}{|s-b|}.
\end{align}

We let $b_0,b_1,\ldots,b_{d-1}=b$ be the real numbers corresponding to the ancestor intervals of $I$ in the decomposition procedure.  

We begin our analysis after the application of \fbox{$D2$} in Step 1.  
{\it{Case I:}}  the ancestor $J$ of $I$ arising after \fbox{$D2$} is a gap.  Then $b_1=b_0$ and
\[
L_1(s) = B_1 \prod_i (s-b_0-\beta_i)^{n_i},
\]
where the $\beta_i$ are the complex numbers with respect to which \fbox{$D2$} was performed.  By the product rule and the triangle inequality,
\[
|L_1'(s)| \leq \sum_i \frac{n_i |L_1(s)|}{|s-b_0-\beta_j|}.
\]
Thus, by \eqref{ineq:characterize_gap}, 
\[
|L_1'(s)| \leq C_{N,d} \frac{|L_1(s)|}{|s-b_0|} = C_{N,d} \frac{|L_1(s)|}{|s-b_1|},
\]
where $C_{N,d} = \sum_i n_i$.  
{\it{Case II:}}  the ancestor $J$ of $I$ arising after \fbox{$D2$} is dyadic.  In the notation of \fbox{$D1$},
\[
L_1(s) = B_1 \prod_i (s-\eta_i)^{n_i},
\]
and $b_1$ satisfies \eqref{ineq:characterize_d1_1} with the index $i$ replaced by 1 on $I$.  Hence, by the same arguments as in Case I,
\[
|L_1'(s)| \leq C_{N,d} \frac{|L_1(s)|}{|s-b_1|}.
\]

We wish to prove that after Step $n$, 
\begin{align} \label{ineq:L1'_step_n}
|L_1'(s)| \leq C_{N,d} \frac{|L_1(s)|}{|s-b_n|}.
\end{align}
We proceed by induction, assuming that \eqref{ineq:L1'_step_n} holds after step $n$.  Step $n+1$ begins with a \fbox{$D2$} decomposition.  If after this the ancestor $J$ of $I$ is a gap, we have $b_{n+1}=b_n$.  If the ancestor $J$ of $I$ is dyadic, by the way \fbox{$D1$} is performed in the above version of the algorithm,  $|s-b_{n+1}| \leq |s-b_n|$.  In either case, the analogue of \eqref{ineq:L1'_step_n} holds.

This completes the proof of Lemma~\ref{lemma:L1'}.
\end{proof}

\begin{proof}[Proof of Lemma~\ref{lemma:Id-1}]  
We will prove the lemma by rewriting the left side of \eqref{ineq:DJd} as an integral and then approximating the integral.  

It is easy to see that if $\Lambda'' = \{1,\ldots,d\}$, then $T_2 = 0$.  Let $i_1,\ldots,i_k$ be an increasing enumeration of the elements of $\Lambda \backslash \Lambda''$.  Using antisymmetry of $J_{d-1}$, \begin{align} \label{eq:DJd}
(\prod_{j \in \Lambda''} \frac{\partial}{\partial \tau_j}) H(\tau) = 
\pm \int_{\tau_{i_1}}^{\tau_{i_2}} \cdots \int_{\tau_{i_{k-1}}}^{\tau_{i_k}} J_{d-1}(s_1,\ldots,s_{k-1}, \tau_{\Lambda''})\, ds_1 \cdots ds_{k-1}.
\end{align}
Next, we proceed to the estimation of $J_{d-1}(t)$ for $t \in I^d$.  Recall that we are assuming that $b=0$ and that $I \subset (0,\infty)$.

Since $J_P$ is antisymmetric, by reordering indices if needed, we may assume that $t_1 <  \cdots < t_d$.  Thus the only points $(s_1,\ldots,s_{d-1})$ relevant to the integral in \eqref{eqn:J_P_integral} are those with $s_1 < \cdots < s_{d-1}$.  By induction, for each $1\leq r \leq d$, only those $s \in I^{r-1}$ with $s_1 <\cdots < s_{r-1}$ are relevant in \eqref{def:Jr}.  Because the $s_i$ form a monotone sequence and since the $L_{j}$ are all single-signed on $I$ (by the initial decomposition), each integrand $J_{k-1}$ in \eqref{def:Jr} is single signed on the domain of integration.  This fact will make valid the approximations below.

We will proceed by induction.  In \cite[Section 9]{DW}, the authors define certain integers $\sigma_1,\ldots,\sigma_{d-1}$ such that if $S_{d-1}$ is defined inductively by 
\[
S_r(t_1,\ldots,t_r) := \prod_{s=1}^r t_s^{\sigma_r} \int_{t_1}^{t_2} \cdots \int_{t_{r-1}}^{t_r} S_{r-1}(w_1,\ldots,w_{r-1})\,dw_{r-1}\cdots dw_1,
\]
for $1 <r \leq d-1$, then we have (at the last stage) that $|J_{d-1}| \sim |S_{d-1}|$.  We note in particular that the absolute value of the right hand side of \eqref{eq:DJd} is 
\begin{align} \label{eq:DTd}
\sim \pm \int_{\tau_{i_1}}^{\tau_{i_2}} \cdots \int_{\tau_{i_{k-1}}}^{\tau_{i_k}} S_{d-1}(s_1,\ldots,s_{k-1}, \tau_{\Lambda''})\, ds_1 \cdots ds_{k-1}.
\end{align}

We establish inductively a formula for $S_r$.  To do this, we will use the fact that the algorithm in \cite{DW} ensures that the $\sigma_j$ are such that none of the $S_j$ contains a $t_i^{-1}$ term (and hence none contains a $\log t_i$ term).  Suppose that 
\begin{align} \label{eq:induct}
S_{r-1}(t_1,\ldots,t_{r-1}) = C_{r-1} \sum_{\rho \in \Perm_{r-1}} \rm{sgn}(\rho) t_1^{k_{\rho(1)}} \cdots t_{r-1}^{k_{\rho(r-1)}},
\end{align}
where $C_{r-1} \neq 0$, and the $k_i$ are integers.  (We recall that $\Perm_{r-1}$ is the set of permutations on $r-1$ indices.)  The case $r=2$ of this hypothesis is trivial.  Then
\begin{align*}
T_r(t_1,\ldots,t_r) &:= \int_{t_1}^{t_2} \cdots \int_{t_{r-1}}^{t_r} S_{r-1}(w_1,\ldots,w_{r-1})\,dw_{r-1}\cdots dw_1 \\
&= C' \sum_{\rho \in \Perm_{r-1}} \rm{sgn}(\rho) \prod_{j=1}^{r-1}(t_j^{k_{\rho(j)}+1} - t_{j+1}^{k_{\rho(j)}+1}).
\end{align*}
When we multiply out one of the summands in the last line above, we will have a sum of two types of monomials:  those with no repeated indices, and those with repeated indices.  For example,
\[
(t_1-t_2)(t_2-t_3) = (t_1t_2 - t_1t_3 + t_2t_3) - t_2t_2.
\]
The terms with repeated indices will cancel when we sum on $\rho \in \Perm_{r-1}$, and we obtain
\begin{align*}
T_r(t_1,\ldots,t_r) &= C' \sum_{\rho \in \Perm_{r-1}} \rm{sgn}(\rho) \sum_{j=1}^r (-1)^{r-j} t_1^{k_{\rho(1)}+1} \cdots t_{j-1}^{k_{\rho(j-1)}+1} t_{j+1}^{k_{\rho(j)}+1} \cdots t_r^{k_{\rho(r-1)}+1} \\
&=  C' \sum_{\tau \in \Perm_r} \rm{sgn}(\tau) t_1^{\ell_{\tau(1)}} \cdots t_r^{\ell_{\tau(r)}},
\end{align*}
where $\ell_j = k_j+1$ for $1 \leq j \leq r-1$ and $\ell_r = 0$.  Therefore $S_r(t_1,\ldots,t_r)$ is also of the form \eqref{eq:induct}.

This implies that 
\[
T_{d}(t_1,\ldots,t_{d}) = \det( [t_j^{\alpha_1}, \cdots, t_j^{\alpha_{d}}]_{j=1}^{d}).
\]
This is a Vandermonde-type determinant and is equal to
\[
T_{d}(t_1,\ldots,t_{d}) = C_{P,I} \scriptp(t) \prod_{j=1}^{d} \prod_{i < j} (t_j-t_i),
\]
with $C_{P,I} \sim 1$ and $\scriptp \nequiv 0$ a symmetric polynomial with non-negative coefficients.  For a reference, see \cite[pp. 200-201]{Weyl}; this was used in related contexts in \cite{ChTAMS}, \cite{Gress06}.  

The right side of \eqref{eq:DTd} equals
\[
(\prod_{j \in \Lambda''} \frac{\partial}{\partial \tau_j}) T_d(t),
\]
whose absolute value is easily shown to be 
\[
\lesssim \sum \frac{|T_d(t)|}{\prod_{j \in \Lambda''} t_j^{\delta_j} |t_j-t_{j'}|^{1-\delta_j}} \sim \sum \frac{|H(t)|}{\prod_{j \in \Lambda''} t_j^{\delta_j} |t_j-t_{j'}|^{1-\delta_j}},
\]
where the sum is as in the statement of the lemma.  This completes the proof of Lemma~\ref{lemma:Id-1}.
\end{proof}

\section{Proof of Lemma~\ref{lemma:mlF}}

In this section, we modify the techniques in the previous section (using arguments similar to those in \cite{StoCCC}) to prove Lemma~\ref{lemma:mlF}.

Recall that $\gamma_1 := \max\{\alpha_1,\beta_1\}$.  We also define
\[
\gamma_2 := \max\{\alpha_2,\beta_2\}.
\]

The proof of the following lemma is almost exactly the same as the proof of Lemma~\ref{lemma:setup_mlE}.  We leave the details to the reader.

\begin{lemma} \label{lemma:setup_mlF}
There exist $x_0 \in E$, a constant $c > 0$, and measurable sets $\Omega_1 \subset I$, $\Omega_i \subset \Omega_{i-1} \times I$ for $2 \leq i \leq 2d-1$ such that
\begin{itemize}
\item $x_0 + \Phi_i(\Omega_i)$ is contained in $E$ if $i$ is even, $F_2$ if $i=2d-1$ and $F_1$ otherwise.
\item $\mu(\Omega_1)=c \beta_1$, and if $t \in \Omega_{i-1}$, then $\mu\{t_i \in I:(t,t_i) \in \Omega_i\}$ equals:  $c\alpha_1$ if $i$ is even, $c\beta_2$ if $i=2d-1$, and $c\beta_1$ otherwise.
\item $t \in \Omega_i$ implies that $t_i$ is greater than or equal to:  $c \gamma_1^n$ if $i<2d-1$ and $c \beta_2^n$ if $i=2d-1$.  Furthermore, if $j<i<2d-1$, then $|t_i-t_j|$ is greater than or equal to:  $c\beta_1t_j^{-2K/d(d+1)}$ if $i$ is odd and $c\alpha_1 t_j^{-2K/d(d+1)}$ if $i$ is even.  Finally, if $j<i=2d-1$, then $|t_i-t_j|$ is greater than or equal to $c't_i$ if $t_j \leq c' \beta_2^n$ and $c' \beta_2 t_j^{-2K/d(d+1)}$ otherwise.
\end{itemize}
\end{lemma}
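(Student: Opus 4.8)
The plan is to mimic the proof of Lemma~\ref{lemma:setup_mlE} almost verbatim, building the sets $\Omega_i$ one index at a time, the only differences being the sizes of the fibers (dictated by which set $\Phi_i(\Omega_i)$ lands in) and the treatment of the final index $i=2d-1$, which now carries the second $F$-set $F_2$ rather than the second $E$-set $E_2$. Concretely, I would proceed by downward construction: first suppose $\Omega_1,\dots,\Omega_{2d-2}$ have been produced with the stated properties (this uses only the lower bounds $T^*\chi_{F_j}(x)\ge\beta_j$ on $E$, $T\chi_{F_j}\cdot$-type bounds interpreted via the nondegeneracy, and $\beta_j\gtrsim\eta\,\scriptt(E,F_j)/|E|$, following Lemma~1 of \cite{ChCCC} as modified in \cite{Gress06},\cite{StoCCC}), then attach the last layer.

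For the last layer: each $t\in\Omega_{2d-2}$ gives a point $y(t)=x_0+\Phi_{2d-2}(t)\in E$; by the hypothesis $T^*\chi_{F_2}(y)\ge\beta_2$ for all $y\in E$ there is a set $I(t)\subset I$ with $\mu(I(t))\gtrsim\beta_2$ and $y(t)+P(I(t))\subset F_2$, which places $x_0+\Phi_{2d-1}(\Omega_{2d-1})\subset F_2$ once we set $\Omega_{2d-1}=\{(t,s):t\in\Omega_{2d-2},\,s\in I(t)\}$ after refinement. The refinement is exactly the one in Lemma~\ref{lemma:setup_mlE}: since $\mu([0,c\beta_2^n])\sim c^{1/n}\beta_2$ we may excise $[0,c\beta_2^n]$ and keep $s\ge c\beta_2^n$ on $I(t)$ while retaining $\mu(I(t))\gtrsim\beta_2$; then for each earlier index $j<2d-1$, if $t_j<c\beta_2^n$ we further excise $[0,2c\beta_2^n]$ so that $s-t_j\ge s/2$, and if $t_j>c\beta_2^n$ we discard the bad set $\{s\in I(t):|s-t_j|<c'\beta_2 t_j^{-2K/d(d+1)}\}$, whose $\mu$-measure is $\lesssim c'\beta_2$ by the same one-line integral estimate (using $s\sim t_j$ on that set because $|s-t_j|<c'\beta_2^n<c's$). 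Choosing $c'$ small relative to the implicit constants leaves $\mu(I(t))\gtrsim\beta_2$ and yields the stated lower bounds $|t_i-t_j|\ge c't_i$ when $t_j\lesssim\beta_2^n$ and $\ge c'\beta_2 t_j^{-2K/d(d+1)}$ otherwise.

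For the intermediate layers $1\le i\le 2d-2$, the construction is the one in \cite{Gress06} (a slight modification of Lemma~1 of \cite{ChCCC}), the key point being that one may replace $T$ by the truncated operator $T_{\gamma_1}$ of \eqref{def:T_gamma}, integrating over $I\backslash[0,c\gamma_1^n]$, because $\langle T_{\gamma_1}\chi_E,\chi_{F_j}\rangle\gtrsim\scriptt(E,F_j)$; this is exactly where the hypothesis $\beta_j\gtrsim\eta\,\scriptt(E,F_j)/|E|$ is used (it guarantees the truncation removes only a small fraction of the mass), and it produces the lower bounds $t_i\ge c\gamma_1^n$ together with the separation bounds $|t_i-t_j|\ge c\beta_1 t_j^{-2K/d(d+1)}$ (for $i$ odd) and $\ge c\alpha_1 t_j^{-2K/d(d+1)}$ (for $i$ even). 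The alternation $\beta_1$/$\alpha_1$/$\beta_2$ of fiber sizes is forced by the target sets: fibers landing us back in $E$ have size $c\alpha_1$ (from $T\chi_E$-type bounds, i.e. $\scriptt(E,F_j)/|F_j|\ge\alpha_j$ with $\alpha_1\le\alpha_1$), fibers landing in $F_1$ have size $c\beta_1$, and the terminal fiber landing in $F_2$ has size $c\beta_2$. I do not expect any genuine obstacle here; the only mild subtlety — and the point most worth being careful about — is bookkeeping the roles of $\alpha_j,\beta_j$ and the monotonicity assumptions ($\beta_2\ge\beta_1$, $\alpha_2\le\alpha_1$) so that the truncation thresholds $\gamma_1^n$ and $\beta_2^n$ are consistent with all the simultaneous lower bounds, exactly as in the proof of Lemma~\ref{lemma:setup_mlE}, which is why the details can safely be left to the reader.
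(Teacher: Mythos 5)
Your proposal is correct and follows essentially the same route as the paper, which itself only remarks that the proof is ``almost exactly the same'' as that of Lemma~\ref{lemma:setup_mlE} and leaves the details to the reader; your construction (iterated fibers using the pointwise bounds $T^*\chi_{F_j}\ge\beta_j$ on $E$ and the averaged bound $\scriptt(E,F_1)/|F_1|\ge\alpha_1$ with the Christ/Gressman refinement, truncation at $c\gamma_1^n$, and the excision argument for the final layer with $\beta_2$ in place of $\alpha_2$) is exactly the intended adaptation. The one small inaccuracy is attributing the truncation step to the hypothesis $\beta_j\gtrsim\eta\,\scriptt(E,F_j)/|E|$: the truncation loss is already controlled using $\alpha_1|F_1|\le\scriptt(E,F_1)$ and $\beta_1|E|\le\scriptt(E,F_1)$, and the $\eta$-hypothesis is instead needed later in Section~7 to obtain the improved lower bound $t_{2d-1}\gtrsim\eta^n\alpha_2^n$.
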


As in the proof of Lemma~\ref{lemma:mlE}, the proof of Lemma~\ref{lemma:mlF} breaks into two cases.  

The first case is when $\beta_1 \gtrsim \alpha_1$ (thus $\beta_2 \geq \beta_1 \gtrsim \alpha_1 \geq \alpha_2$ by assumption).  Proceeding as in Lemma~\ref{lemma:initial_lb}, one can show that 
\begin{align} \label{ineq:initial_lb_F2}
|F_2| \gtrsim \alpha_1^{d(d+1)/2} (\frac{\beta_1}{\alpha_1})^{r_{d+1}} (\frac{\beta_2}{\beta_1})^{(d+1)/2+n(d-1)/2}.
\end{align}
Note that the analogue of Lemma~\ref{lemma:elim_diff} for our current setup is the same as the original except that the first two conditions hold when $k<2d-1$, and when $j<k=2d-1$, we have $|t_k-t_j| \gtrsim \beta_2^{(1+n)/2} \beta_1^{(1-n)/2}(t_kt_j)^{-K/d(d+1)}$.

The second case is when $\beta_1 \ll \alpha_1$.  

To begin, we establish a lower bound for $t_{2d-1}$.  {\it A priori}, the best we can do is $t_{2d-1} \gtrsim \beta_2^n$.  We will show that we can assume that $t_{2d-1} \gtrsim \eta^n\alpha_2^n$ ($\eta$ being the quantity in the statement of Lemma~\ref{lemma:mlF}).  

To see this, we let $E_B \subset E$ be the set of $x$ such that
\[
\mu(\{s \geq c \eta^n \alpha_2^n : x+P(s) \in F_2\}) \ll \beta_2.
\]
Here $c \ll n$.  By the hypothesis that $T^*\chi_{F_2} \geq \beta_2$ on $E$, we have that $T^*_B \chi_{F_2} \gtrsim \beta_2$ on $E_B$, where 
\[
T_B f(x) = \int_{[0,c\eta^n \alpha_2^n]} f(x-P(s))\, d\mu(s).
\]
Supposing that $|E_B| \gtrsim |E|$, this implies that
\[
\scriptt_B(E_B,F_2) := \langle \chi_{E_B}, T_B^* \chi_{F_2} \rangle \gtrsim \beta_2 |E| \gtrsim \eta \scriptt(E,F_2).
\]
On the other hand, $T_B \chi_{E}(x) \ll \eta \alpha_2$ for all $x$, so we must have
\[
\scriptt_B(E_B,F_2) \ll \eta \alpha_2 |F_2| = \eta \scriptt(E,F_2).
\]
This is a contradiction, so we must actually have $|E_B| \ll |E|$.  

Let $E_G = E \backslash E_B$.  Then $|E_G| \sim |E|$.  Moreover, on $E_G$,
\[
T\chi_{F_1} \geq \alpha_1 \qquad T_{c\eta^n \alpha_2^n} \chi_{F_2} \gtrsim \alpha_2
\]
(see \eqref{def:T_gamma}).  Thus $\scriptt(E_G,F_1) \gtrsim \beta_1|E| \geq \eta \scriptt(E,F_1)$ and $\scriptt_{c\eta^n \alpha_2^n}(E_G,F_2) \gtrsim \beta_2 |E| \geq \eta \scriptt(E,F_2)$.  Thus with
\[
\tilde{\alpha}_1 := \frac{\scriptt(E_G,F_1)}{|F_1|} \qquad \tilde{\alpha}_2:= \frac{\scriptt_{c\eta^n \alpha_2^n}(E_G,F_2) }{|F_2|},
\]
we have $\alpha_j \geq \tilde{\alpha}_j \gtrsim \eta \alpha_j$.

Henceforth, we will proceed as though $\eta \sim 1$, and hence $\tilde{\alpha}_j \sim \alpha_j$.  The form of the conclusion in \eqref{eq:ineqexp} is such that the general case requires little change in the analysis.

By refining $\Omega_{2d-1}$ if necessary, we may assume that either $t_{2d-1} \gtrsim \alpha_1^n$ or that $t_{2d-1} \ll \alpha_1^n$ throughout $\Omega_{2d-1}$.  We deal with these cases separately.

\subsection{If $t_{2d-1} \ll \alpha_1^n$}  Keeping in mind that we just proved that the lower bound $t_{2d-1} \gtrsim \gamma_2^n$ holds, we perform a band decomposition precisely as in Section~\ref{subsec:combin}, except that we declare the index $2d-1$ to be free from the beginning (note that $t_{2d-1} \ll t_i$ whenever $i < 2d-1$).  Since $|t_j-t_{2d-1}| \sim t_j$ whenever $j < 2d-1$, and since by definition $2d-1$ can have no indices bound to it, the inequalities \eqref{ineq:si_tauj} and \eqref{ineq:si_tauj-tauj'} hold whenever $i$ is bound to $j$ and $j \neq j' \in \Lambda$.  In addition, whenever $j < 2d-1$,
\[
|t_{2d-1}-t_j| \sim t_j \gtrsim \alpha_1^n \gtrsim \alpha_1^{(n+1)/2}\gamma_2^{(1-n)/2}(t_j t_{2d-1})^{-2K/d(d+1)}.
\]
Thus the analogue of Lemma~\ref{lemma:lb_J} implies that 
\[
J(\tau,\sigma) \gtrsim \alpha_1^{d(d-1)/2} (\frac{\beta_1}{\alpha_1})^M (\frac{\gamma_2}{\alpha_1})^{(1-n)(d-1)/2},
\]
which implies that 
\[
|F_2| \gtrsim \alpha_1^{d(d+1)/2}(\frac{\beta_1}{\alpha_1})^{M+\lceil k/2 \rceil} (\frac{\gamma_2}{\alpha_1})^{(1-n)(d-1)/2} ( \frac{\beta_2}{\beta_1}),
\]
by arguments similar to those in Section~\ref{subsec:combin}.  Because $2d-1$, the first index ($2d-k$), and all of the even indices are free, there are at least $\lceil k/2 \rceil +1$ free indices.  As the number of free plus the number of quasi-free indices equals $d$, the exponent of $\frac{\beta_1}{\alpha_1}$ in the above inequality is $\leq d-1$.  From that and the definition of $\gamma_2$,
\begin{align*}
|F_2|&\geq \alpha_1^{d(d+1)/2} (\frac{\beta_1}{\alpha_1})^{d-1} (\frac{\beta_2}{\alpha_1}) (\frac{\alpha_2}{\alpha_1})^{\max\{0,(1-n)(d-1)/2 - 1\}} (\frac{\beta_2}{\beta_1}) \\
&= \alpha_1^{d(d+1)/2} (\frac{\beta_1}{\alpha_1})^{d} (\frac{\beta_2}{\beta_1})^2 (\frac{\alpha_2}{\alpha_1})^{\max\{0,(1-n)(d-1)/2 - 1\}}.
\end{align*}
By checking the cases when the above $\max$ is zero and nonzero separately, one can verify that \eqref{eq:ineqexp} is satisfied, and Lemma~\ref{lemma:mlF} is proved in the case $\beta_1 \ll \alpha_1$ and $t_{2d-1} \ll \alpha_1^n$.

\subsection{If $t_{2d-1} \gtrsim \alpha_1^n$}  We perform a band decomposition.  The following lemma holds:

\begin{lemma} \label{lemma:exists_b_struct2}
Let $\eps > 0$.  Then there exists $c_{d,\eps} < \delta' < \eps \delta < \eps c$, an integer $d \leq k \leq 2d-1$, an element $t_0 \in \Omega_{2d-k-1}$, a set $\omega \subset \{t:(t_0,t) \in \Omega_{2d-1}\}$ with $\mu^k(\omega) \sim \alpha_1^{\lfloor k/2 \rfloor} \beta_1^{\lceil k/2 \rceil} (\beta_2/\beta_1)$, and a band structure on $[2d-k,2d-1]$ such that properties (i--iv) of Lemma~\ref{lemma:exists_b_struct} hold.
\end{lemma}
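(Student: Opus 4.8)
The plan is to reproduce the band-decomposition argument behind Lemma~\ref{lemma:exists_b_struct} --- which is itself the weighted modification, carried out as in \cite{StoCCC}, of Christ's construction in \cite{ChCCC} --- with $\Omega_{2d-1}$ and its coordinate fibers playing the role formerly played by $\Omega_{2d}$. Everything in Section~\ref{subsec:combin} that was deduced from $\alpha_2 \ge \alpha_1$ was used there for one purpose only: to force the uniform lower bound $t_i \gtrsim \alpha_1^n$ on the relevant $d$-tuples. Here we are in the subcase $t_{2d-1} \gtrsim \alpha_1^n$, and Lemma~\ref{lemma:setup_mlF} gives $t_i \gtrsim \gamma_1^n \sim \alpha_1^n$ for $i < 2d-1$ (recall $\beta_1 \ll \alpha_1$, so $\gamma_1 \sim \alpha_1$); hence $t_i \gtrsim \alpha_1^n$ for every $1 \le i \le 2d-1$ on $\Omega_{2d-1}$, which is precisely the input the construction needs.

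The first step is to record the analogue of Lemma~\ref{lemma:elim_diff}: after refining $\Omega_{2d-1}$ to a subset of comparable measure on which the coordinates are monotone under a fixed permutation, one has, for $j<i$ and $t$ in that subset, $|t_i-t_j| \gtrsim \alpha_1(t_it_j)^{-K/d(d+1)}$ when $i$ is even and $|t_i-t_j| \gtrsim \beta_1(t_it_j)^{-K/d(d+1)}$ when $i$ is odd, the exceptional index $i=2d-1$ included in the latter case. Each follows from the corresponding inequality in Lemma~\ref{lemma:setup_mlF} exactly as Lemma~\ref{lemma:elim_diff} follows from Lemma~\ref{lemma:setup_mlE}: split into the case $t_i\sim t_j$, where $t_j^{-2K/d(d+1)}\sim(t_it_j)^{-K/d(d+1)}$, and the case $t_i\not\sim t_j$, where $|t_i-t_j|\gtrsim\max\{t_i,t_j\}\gtrsim\alpha_1^n$ and one converts using $2nK/d(d+1)=1-n$ together with $t_i,t_j\gtrsim\alpha_1^n$. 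The exceptional index $2d-1$ is odd and carries the larger mass $\beta_2\ge\beta_1$, but the surplus factor $\beta_2/\beta_1$ plays no role in these difference estimates; it is recovered only in the measure of the fiber below, so treating $2d-1$ as an ordinary odd index suffices for properties (iii) and (iv).

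With these bounds available I would run the iterative band decomposition exactly as sketched for Lemma~\ref{lemma:exists_b_struct}: order the $2d-1$ coordinates by a permutation on a full-measure subset, cut into bands at the threshold $\delta\alpha_1(t_it_j)^{-K/d(d+1)}$ with $\delta=c_{N,d}/(2d)$ initially, and iterate, lowering $\delta$ and $\delta'$ only finitely often and so as to maintain $c_{d,\eps}<\delta'<\eps\delta<\eps c$. Within-band comparability $t_i\sim t_j$ again follows from $t_i\gtrsim\alpha_1^n$; the analogue of Lemma~\ref{lemma:elim_diff} forces every even index to be the least, hence free, element of its band; and properties (i)--(iv), including the count of exactly $d$ free-or-quasi-free indices, come out of the same combinatorial bookkeeping as in \cite{ChCCC}. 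Selecting $k$ at the stage that bookkeeping dictates and fixing $t_1,\dots,t_{2d-k-1}$ to obtain $t_0\in\Omega_{2d-k-1}$ and $\omega\subset\{t:(t_0,t)\in\Omega_{2d-1}\}$, the estimate $\mu^k(\omega)\sim\alpha_1^{\lfloor k/2\rfloor}\beta_1^{\lceil k/2\rceil}(\beta_2/\beta_1)$ is just the product over $i\in[2d-k,2d-1]$ of the per-coordinate masses from Lemma~\ref{lemma:setup_mlF} ($c\alpha_1$ when $i$ is even, $c\beta_1$ when $i$ is odd and $i<2d-1$, $c\beta_2$ when $i=2d-1$), the refinements en route having kept all masses comparable.

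The only genuine work, and the step I expect to be fussy rather than hard, is the transcription itself: one runs Christ's algorithm carrying two modifications simultaneously --- the weight $s^{2K/d(d+1)}$ in $\mu$ and the lone exceptional index $2d-1$ with its distinct mass $\beta_2$ --- and one must verify at each stage that $t_{2d-1}\gtrsim\alpha_1^n$ does everything $\alpha_2\ge\alpha_1$ did in Section~\ref{subsec:combin}. Since the weighted modification was already performed for Lemma~\ref{lemma:exists_b_struct} and the handling of a single exceptional index is routine, the new content is slight, and, as there, the remaining details may be deferred to \cite{StoCCC} and \cite{ChCCC}.
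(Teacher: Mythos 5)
Your proposal is correct and follows the same route as the paper, which disposes of this lemma in one sentence by deferring to the (already weighted) band-decomposition argument of Lemma~\ref{lemma:exists_b_struct} and its sources \cite{ChCCC}, \cite{StoCCC}. Your elaboration — that the standing hypothesis $t_{2d-1}\gtrsim\alpha_1^n$ together with $t_i\gtrsim\gamma_1^n\sim\alpha_1^n$ supplies exactly what $\alpha_2\ge\alpha_1$ supplied before, that the exceptional index $2d-1$ still satisfies the odd-index difference bound, and that its extra mass $c\beta_2$ accounts for the factor $\beta_2/\beta_1$ in $\mu^k(\omega)$ — matches the intended modifications.
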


As with Lemma~\ref{lemma:exists_b_struct}, this can be proved by making a few modifications to the proof of the analogous lemma in \cite{ChCCC} .

Say $2d-1$ is free.  Note that if $\beta_2 \geq \alpha_1$, we have that 
\[
|t_{2d-1}-t_j| \gtrsim \beta_2^{(1+n)/2}\alpha_1^{(1-n)/2}(t_{2d-1}t_j)^{-K/d(d+1)},
\]
as can be shown in a manner similar to the proof of Lemma~\ref{lemma:elim_diff}.  On the other hand, if $\beta_2 < \alpha_1$, then by ({\it{ii}}) of Lemma~\ref{lemma:exists_b_struct}, 
\[
|t_{2d-1}-t_j| \gtrsim \alpha_1(t_{2d-1}t_j)^{-K/d(d+1)} \gtrsim \beta_2(t_{2d-1}t_j)^{-K/d(d+1)}.
\]

In addition, because $t_{2d-1} \gtrsim \alpha_1^n$, the arguments leading up to \eqref{ineq:si_tauj} and \eqref{ineq:si_tauj-tauj'} apply, and we have that whenever $i$ is bound to $j$ and $j \neq j' \in \Lambda$ ($\Lambda$ being the set of free and quasi-free indices)
\[
|t_i-t_j| < \eps t_j \qquad |t_i-t_j| < \eps |t_j-t_{j'}|.
\]
Thus, for sufficiently small $\eps$, the analogue of Lemma~\ref{lemma:lb_J} implies that
\begin{align} \label{ineq:local88}
|J(\tau,\sigma)| \gtrsim \alpha_1^{d(d+1)/2} (\beta_1/\alpha_1)^{M+\lceil k/2 \rceil} (\beta_2/\alpha_1)^{\rho} (\beta_2/\beta_1) \prod_{j=1}^d \tau_j^{2K/d(d+1)},
\end{align}
where $\rho=(1+n)/(d-1)/2$ if $\beta_2 \geq \alpha_1$ and 1 if $\beta_2 < \alpha_1$.  

Since $d \geq 4$, $\rho$ is at least 1 when $\beta_2 \geq \alpha_1$, and \eqref{ineq:local88} holds with $\rho=1$ regardless of the relative magnitudes of $\beta_2$ and $\alpha_1$.  Moreover, $2d-1$, all of the even indices, and the least index ($2d-k$) are free.  Since $M$ plus the number of free indices equals $d$, one can check that the exponent of $\beta_1/\alpha_1$ is at most $d-1$.  Hence
\[
|F_2| \gtrsim \alpha_1^{d(d-1)/2} \beta_1^d (\beta_2/\beta_1)^2.
\]
One can verify that the inequalities and equalities needed for Lemma~\ref{lemma:mlF} are satisfied.  

Finally, suppose that $2d-1$ is not free (hence $\beta_2 < \alpha_1$).  Then one can show that if $t \in \omega$ and $j < 2d-1$, then 
\[
|t_j-t_{2d-1}| \gtrsim \beta_2 (t_jt_{2d-1})^{-K/d(d+1)}.
\]
Indeed, if $j$ and $2d-1$ are in different bands, $|t_j-t_{2d-1}| > \delta \alpha_1 (t_jt_{2d-1})^{-K/d(d+1)}$ and if $j$ and $2d-1$ are in the same band, $t_j \sim t_{2d-1}$ as was shown in Section~\ref{subsec:combin}, and the inequality follows from the construction of $\Omega_{2d-1}$.  

One can produce a two-stage band structure as in \cite{StoCCC} by partitioning the band $\scriptb$ containing $2d-1$.  For completeness, we note that one obtains the following lemma:

\begin{lemma}\label{lemma:2stage_b_struct}  Let $\eps > 0$.  Then there exist parameters $\delta,\delta',\rho,\rho'$ satisfying
\[
0 < c_{d,\eps} < \rho' < \eps \rho \qquad \rho < \delta' \qquad c_{d,\eps} < \delta' < \eps \delta,
\]
a set $\omega \subset \Omega_{2d-1}$ with $\mu^{2d-1}(\omega) \sim \mu^{2d-1}(\Omega_{2d-1})$, and a two stage band structure on $\{1,\ldots,2d-1\}$ satisfying the following:  The first stage is a band structure on $\{1,\ldots,2d-1\}$.  Each even index is free after the first stage.  The second stage is a band structure on the band $\scriptb$ containing $2d-1$.  Let $t \in \omega$.  Consider the bands created in the first stage. 
\begin{itemize}
\item If $i$ and $j$ lie in different bands, then $|t_i-t_j| \geq \delta \alpha_1 (t_it_j)^{-K/d(d+1)}$.
\item If $i$ is quasi-bound to $j$, then $c_n \beta_1(t_it_j)^{-K/d(d+1)} \leq |t_i-t_j| < \delta \alpha_1 (t_it_j)^{-K/d(d+1)}$.
\item If $i$ is bound to $j$, then $|t_i-t_j| < \delta' \alpha_1 (t_it_j)^{-K/d(d+1)}$.
\end{itemize}
Now we let $i,j \in \scriptb$.
\begin{itemize}
\item If $i$ and $j$ lie in different bands, then $|t_i-t_j| \geq \rho \gamma_2 (t_it_j)^{-K/d(d+1)}$.
\item If $i$ is quasi-bound to $j$, then $c_n \beta_1 (t_it_j)^{-K/d(d+1)} \leq |t_i-t_j| < \rho \gamma_2 (t_it_j)^{-K/d(d+1)}$.  If $i=2d-1$ is quasi-bound to $j$, the lower bound is $c_n \beta_2 (t_it_j)^{-K/d(d+1)} < |t_i-t_j|$.
\item If $i$ is bound to $j$, then $|t_i-t_j| \leq \rho' \gamma_2 (t_it_j)^{-K/d(d+1)}$.
\end{itemize}
Here, 
\[
\gamma_2 = \max\{\alpha_2,\beta_2\}.
\]
\end{lemma}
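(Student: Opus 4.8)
The plan is to run Christ's band-decomposition algorithm from \cite{ChCCC} twice, exactly as in the two-stage construction of \cite{StoCCC}, incorporating the weight-induced modifications already used in the proof of Lemma~\ref{lemma:exists_b_struct}.

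For the first stage I would run the algorithm on all $2d-1$ coordinates at the reference scale $\alpha_1$, with quasi-bound scale $\beta_1$ --- that is, proceed as in the proof of Lemma~\ref{lemma:exists_b_struct2}, but retain all coordinates rather than pass to a slice. This produces a band structure on $\{1,\dots,2d-1\}$ and a refinement $\omega_1\subset\Omega_{2d-1}$ with $\mu^{2d-1}(\omega_1)\sim\mu^{2d-1}(\Omega_{2d-1})$ (only $O(d)$ steps are needed and each loses only a constant factor), arranged so that the first-stage conclusions of the lemma hold with parameters $c_{d,\eps}<\delta'<\eps\delta$. As in Section~\ref{subsec:combin}, each even index is then free and no first-stage band contains two even indices; in particular, if the band $\scriptb$ containing $2d-1$ has an even index, that index is the least (hence free) element of $\scriptb$, while $2d-1$ is its largest element.

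For the second stage I would re-partition $\scriptb$ at the finer scale $\gamma_2=\max\{\alpha_2,\beta_2\}$. First note that by the first-stage ``bound'' estimate, all coordinates $t_i$ with $i\in\scriptb$ satisfy $|t_i-t_j|<\delta'\alpha_1(t_it_j)^{-K/d(d+1)}$, hence $t_i\sim t_j$ on $\omega_1$ by the computation in Section~\ref{subsec:combin}; this comparability lets me replace $(t_it_j)^{-K/d(d+1)}$ by $t_i^{-2K/d(d+1)}$ freely inside $\scriptb$. Then I would sort the $t_i$, $i\in\scriptb$, on a further refinement $\omega\subset\omega_1$ of comparable measure and run the band-splitting iteration again on these coordinates, now declaring a gap exactly when two consecutive sorted values differ by more than $\rho\gamma_2(t_it_j)^{-K/d(d+1)}$, and shrinking $\rho,\rho'$ finitely often so as to keep $c_{d,\eps}<\rho'<\eps\rho$ and $\rho<\delta'$. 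The weight-induced modifications are the same as in the proof of Lemma~\ref{lemma:exists_b_struct}. The one genuinely new ingredient is the special index $2d-1$: for $j\in\scriptb$ with $j\neq 2d-1$ the available a priori lower bound is the sharper $|t_{2d-1}-t_j|\gtrsim\beta_2(t_{2d-1}t_j)^{-K/d(d+1)}$ --- precisely the inequality established in the paragraph preceding the lemma, using $t_{2d-1}\sim t_j$ together with the construction of $\Omega_{2d-1}$ in Lemma~\ref{lemma:setup_mlF} --- whereas generic quasi-bound pairs only admit a $\beta_1$-separation; so when $2d-1$ ends up quasi-bound to some $j$ in the second stage I would record its lower bound with $\beta_2$ in place of $\beta_1$, yielding the last clause in the second group of bullets.

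The step I expect to be the main obstacle is not any single computation but the bookkeeping that keeps the two scales consistent. One must check that $\rho\gamma_2<\delta'\alpha_1$, so that the second-stage ``different bands'' separation is compatible with the first-stage ``bound'' bound and every second-stage band lies inside $\scriptb$; this holds because in the present sub-case $\gamma_2\leq\alpha_1$ (here $\alpha_2\leq\alpha_1$ and $\beta_2<\alpha_1$) together with $\rho<\delta'$. One must also verify that the comparability $t_i\sim t_j$ for $i,j\in\scriptb$ survives all the refinements, since it is exactly what legitimizes the substitution of $t_i^{-2K/d(d+1)}$ for $(t_it_j)^{-K/d(d+1)}$ in the second stage and, in particular, what turns the $\beta_2$-separation of $2d-1$ from its band-mates into the stated form. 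These are the points handled in \cite{StoCCC}, and once the weight modifications from the proof of Lemma~\ref{lemma:exists_b_struct} are in place, the argument there carries over with no essential change.
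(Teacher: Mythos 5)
Your proposal is correct and follows essentially the same route as the paper, which itself only gives a two-sentence pointer to \cite{StoCCC}: run Christ's band algorithm once at scale $\alpha_1$ on all $2d-1$ indices, then re-run it at the finer scale $\gamma_2$ on the band containing $2d-1$, with the weight handled exactly as in the proof of Lemma~\ref{lemma:exists_b_struct}. Your consistency check $\gamma_2\leq\alpha_1$ (valid since $\beta_2<\alpha_1$ and $\alpha_2\leq\alpha_1$ in this sub-case) is precisely the point the paper singles out, since it guarantees $t_i\gtrsim\gamma_2^n$ for $i\in\scriptb$ and so lets the weighted telescoping argument go through at the second-stage scale.
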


The proof of Lemma~\ref{lemma:2stage_b_struct} is similar to arguments in \cite{StoCCC}, with modifications as in previous sections to handle the measure $\mu$.  Here one uses the fact that $\beta_2 < \alpha_1$ implies that $\gamma_2 < \alpha_1$ (because $\alpha_2 < \alpha_1$), and hence $t_i > \gamma_2^n$ for $i \in \scriptb$.

With Lemma~\ref{lemma:2stage_b_struct} proved, the proof of Lemma~\ref{lemma:mlF} is exactly as in \cite{StoCCC}, with adaptations made to handle the measure $\mu$ as in Section~\ref{subsec:combin}.  One needs an analogue of Lemma~\ref{lemma:lb_J}, but the adaptation is straightforward.  In making this adaptation, it is important to note the following:  letting $\tau,s,\sigma$ be as in Section~\ref{subsec:combin}, then \eqref{ineq:si_tauj} and \eqref{ineq:si_tauj-tauj'} hold whenever $i$ is bound to $j$ and $j' \neq j$ is free or quasi-free.  

Now Lemma~\ref{lemma:mlF} has been proved in all possible cases.



\begin{thebibliography}{99}
\bibitem{BOSI}
{J.-G.~Bak, D.~M.~Oberlin, \and A.~Seeger},
{\em Restriction of Fourier transforms to curves and related oscillatory integrals}, 
Amer.\ J.~Math.\ 131 (2009), no.~2, 277--311.
%
\bibitem{BOSII}
{\bysame},
{\em Restriction of Fourier transforms to curves. II. Some classes with vanishing torsion},
J.~Aust.\ Math.\ Soc.\ 85 (2008), no.~1, 1--28.
%
\bibitem{Choi_JAustMS}
{Y.~Choi}, 
{\em The $L^p-L^q$ mapping properties of convolution operators with the affine arclength measure on space curves}, 
J.~Aust.\ Math.\ Soc.\ 75 (2003), no.~2, 247--261.
%
\bibitem{Choi_JKMS}
{\bysame}, 
{\em Convolution operators with the affine arclength measure on plane curves}, 
J.~Korean Math.\ Soc.\ 36 (1999), no.~1, 193--207.
%
\bibitem{ChTAMS}
{M.~Christ},
{\em On the restriction of the Fourier transform to curves: endpoint results and the degenerate case}, Trans.\ Amer.\ Math.\ Soc.\ 287 (1985), no.~1, 223--238.
%
\bibitem{ChCCC}
{\bysame},
{\em Convolution, curvature, and combinatorics. A case study},
Internat.\ Math.\ Res.\ Notices  19 (1998), 1033-1048.
%
\bibitem{ChQex} 
{\bysame}, 
{\em Quasi-extremals for a Radon-like transform}, preprint.
%
\bibitem{DF-GW}
{S.~Dendrinos, M.~Folch-Gabayet, \and J.~Wright}
{\em An affine invariant inequality for rational functions and applications in harmonic analysis}, preprint.
%
\bibitem{DLW} 
{S.~Dendrinos, N.~Laghi, \and J.~Wright},
{\em Universal $L^p$ improving for averages along polynomial curves in low dimensions}, 
J.~Funct.\ Anal.\ 257 (2009), no.~5, 1355--1378.
%
\bibitem{DW}
{S.~Dendrinos \and J.~Wright},
{Fourier restriction to polynomial curves I:  a geometric inequality}, preprint.
%
\bibitem{Drury90}
{S.~W.~Drury}, 
{\em Degenerate curves and harmonic analysis}, 
Math.\ Proc.\ Cambridge Philos.\ Soc.\ 108 (1990), no.~1, 89--96.
%
\bibitem{DMII}
{S.~W.~Drury \and B.~P.~Marshall}, 
{\em Fourier restriction theorems for degenerate curves}, 
Math.\ Proc.\ Cambridge Philos.\ Soc.\ 101 (1987), no.~3, 541--553.
%
\bibitem{DMI}
{\bysame},
{\em Fourier restriction theorems for curves with affine and Euclidean arclengths},
Math.\ Proc.\ Cambridge Philos.\ Soc.\ 97 (1985), no.\ 1, 111--125.
%
\bibitem{GressPolyEndpt}
{P.~T.~Gressman}
{\em $L^p$-improving properties of averages on polynomial curves and related integral estimates},  
Math.\ Res.\ Lett.\ 16 (2009), no.~6, 971--989.
%
\bibitem{Gress06}
{\bysame},
{\em Convolution and fractional integration with
measures on homogeneous curves in $\Bbb R\sp n$},
Math.\ Res.\ Lett.\ 11 (2004), no.\ 5-6, 869--881.
%
\bibitem{Gugg}
{H.~W.~Guggenheimer},
{\em Differential geometry}, 
McGraw-Hill Book Co., Inc., New York-San Francisco-Toronto-London 1963.
%
\bibitem{Laghi}
{N.~Laghi},
{\em A note on restricted X-ray transforms},
Math.\ Proc.\ Cambridge Philos.\ Soc.\ 146 (2009), no.~3, 719--729. 
%
\bibitem{Litt}
{W.~Littman},
{\em $L\sp{p}-L\sp{q}$-estimates for singular integral
operators arising from hyperbolic equations. Partial differential
equations}, (Proc.\ Sympos.\ Pure Math.,\ Vol.\ XXIII, Univ.\
California, Berkeley, Calif.,\ 1971),  Amer.\ Math.\ Soc.,\
Providence,\ R.I.,\ (1973) 479--481.
%
\bibitem{Oberlin_arxiv}
{D.~M.~Oberlin},
{\em Convolution with measures on flat curves in low dimensions}, 
preprint, arXiv:  0911.1471.
%
\bibitem{Oberlin02}
{\bysame}, 
{\em Convolution with measures on polynomial curves}, 
Math.\ Scand., 90 (2002), no.~1, 126--138. 
%
\bibitem{Obe3}
{\bysame}, 
{\em A convolution estimate for a measure on a curve in $\bold R^4$. II}, 
Proc.\ Amer.\ Math.\ Soc.\ 127 (1999), no.~1, 217--221.
%
\bibitem{Obe2}
{\bysame}, 
{\em A convolution estimate for a measure on a curve in $\bold R^4$}, 
Proc.\ Amer.\ Math.\ Soc.\ 125 (1997), no.~5, 1355--1361. 
%
\bibitem{Obe1}
{\bysame},
{\em Convolution estimates for some measures on curves},
Proc.\ Amer.\ Math.\ Soc.\ 99 (1987), no.~1, 56--60.
%
\bibitem{Pan}
{Y.~Pan}, 
{\em $L^p$-improving properties for some measures supported on curves}, 
Math.\ Scand.\ 78 (1996), no.~1, 121--132.
%
\bibitem{Shaf}
{I.~R.~Shafarevich}, 
{\em Basic Algebraic Geometry}, Springer-Verlag, Berlin, 1977.
%
\bibitem{Sjolin}
{P.~Sj\"olin},
{\em Fourier multipliers and estimates of the Fourier transform of measures carried by smooth curves in $R^{2}$},
Studia Math.\ 51 (1974), 169--182.
%
\bibitem{StoCCC}
{B.~Stovall},
{\em Endpoint bounds for a generalized Radon transform}, 
J.~Lond.\ Math.\ Soc.\ (2) 80 (2009), no.~2, 357--374.
%
\bibitem{TW}
{T.~Tao \and J.~Wright},
{\em $L\sp p$ improving bounds for averages along curves},
J.\ Amer.\ Math.\ Soc.\ 16 (2003), no.\ 3, 605--638.
%
\bibitem{Weyl}
{H.~Weyl},
{\em The classical groups},
Princeton Univ.~ Press, Princeton NJ, 1946.
\end{thebibliography}
\end{document}